\title{The \texorpdfstring{$L^\infty$}{L infinity}-positivity preserving property and stochastic completeness}
\date{\today}
\author{Andrea Bisterzo}
\address[A. Bisterzo]{Dipartimento di Matematica e Applicazioni,
Università degli Studi di Milano\hyp{}Bicocca, Via R. Cozzi 55, I-20125, Milano}
\email{a.bisterzo@campus.unimib.it
}
\author{Ludovico Marini} 
\address[L. Marini]{Dipartimento di Matematica e Applicazioni,
Università degli Studi di Milano\hyp{}Bicocca, Via R. Cozzi 55, I-20125, Milano}
\email{l.marini9@campus.unimib.it}
\begin{document}

\begin{abstract}
    We say that a Riemannian manifold satisfies the $L^p$-positivity preserving property if  $(-\Delta + 1)u\ge 0$ in a distributional sense implies $u \ge 0$ for all $ u \in L^p$.
    While geodesic completeness of the manifold at hand ensures the $L^p$-positivity preserving property for all $p \in (1, +\infty)$, when $p = + \infty$ some assumptions are needed. 
    In this paper we show that the $L^\infty$-positivity preserving property is in fact equivalent to stochastic completeness, i.e., the fact that the minimal heat kernel of the manifold preserves probability. 
    The result is achieved via some monotone approximation results for distributional solutions of $-\Delta + 1 \ge 0$, which are of independent interest. 
\end{abstract}

\maketitle

\section{Introduction} 
\label{sec:introduction}
Let $(M, g)$ be an $n$-dimensional Riemannian manifold with Riemannian measure $d\mu_g$. 
In the following $\Delta = \Div\nabla$ is the (negatively defined) Laplace-Beltrami operator and, unless explicitly stated, all integrals are taken with respect to the Riemannian volume measure $d\mu_g$.

The aim of this paper is to study qualitative properties for certain solutions of elliptic PDEs involving the following Schr\"odinger operator
\begin{equation}
    \label{eq:schrodinger operator L}
    \mathcal{L}  \coloneqq \Delta  - 1. 
\end{equation}
We say that $ u \in L^1_{\loc}(M)$ solves $(-\Delta  + 1)u \geq 0$ \textit{in the sense of distributions} if 
\begin{equation*}
   \int_M u(-\Delta + 1)\varphi \geq 0
\end{equation*}
for all $\varphi \in C^\infty_c(M)$ with $\varphi \ge 0$.
Note that this is equivalent to say that $(-\Delta + 1)u$ is a positive Radon measure. 
If more regularity is assumed, namely $u \in W^{1, 2}_{\loc}(M)$, we talk of a \textit{weak} solution of $(-\Delta + 1) u \ge 0$ if
\begin{equation*}
   \int_M g(\nabla \varphi, \nabla u) + u\varphi \geq 0
\end{equation*}
for all $\varphi \in C^\infty_c(M)$ with $\varphi \ge 0$.
Finally, if $u \in C^2(M)$, then $(-\Delta + 1) u \geq 0$ is intended in a strong, pointwise sense. 
Naturally, if $u \in C^2(M)$ is a strong solution of the inequality, it is also a weak and thus distributional solution.

We begin with the following definition:
\begin{defin}
\label{def:L^p pp}
Let $p \in [1, + \infty]$, we say that $(M,g)$ has the  \textit{$L^p$-positivity preserving property} if every $u\in L^p(M)$ satisfying 
\begin{equation}
    \label{eq: -delta + 1 >= 0}
    (-\Delta + 1) u \ge 0
\end{equation}
in the sense of distributions is non-negative a.e.
\end{defin}
The definition was proposed by G\"uneysu in \cite{G2016} although the case $p=2$ of this property appears in previous works of Kato, \cite{K1972}, and Braverman, Milatovic and Shubin, \cite{BMS02}.
In this last paper, the authors proved that the validity of the $L^2$-positivity preserving property implies the essential self-adjointness of the Schr\"odinger operator $-\Delta + V$ for all $L^2_{\loc}$ non-negative potentials $V$. 
Recall that $-\Delta+V:C^\infty_c(M)\to L^2(M)$ is \textit{essentially self-adjoint} if it has an unique self-adjoint extension to $L^2(M)$ (its closure).
On the other hand, the operator $-\Delta + V$ is known to be essentially self-adjoint on geodesically complete manifolds, see \cite[Corollary 2.9]{shubin2001essential} or \cite{BMS02} and \cite{guneysu2013path} for the case of operators acting on Hermitian vector bundles. 
The combination of these results lead Braverman, Milatovic and Schubin to formulate the following

\begin{conjecture}[BMS]
If $(M, g)$ is a geodesically complete Riemannian manifold then the $L^2$-positivity preserving property holds.
\end{conjecture}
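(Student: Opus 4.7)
The plan is to reduce the conjecture to showing that the negative part $u_- = \max(-u,0)$ of a distributional subsolution must vanish. The key ingredients are a Kato-type inequality (to transfer the differential inequality from $u$ to $u_-$), an interior regularity step (to upgrade $u_-$ from merely integrable to $W^{1,2}_{\loc}$), and a Caccioppoli-type cutoff argument that crucially exploits geodesic completeness through the existence of cutoffs with small gradient.

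First I would decompose $u = u_+ - u_-$, noting $u_- \in L^2(M)$. By a distributional version of Kato's inequality one has $\Delta u_- \ge -\chi_{\{u<0\}}\Delta u$; combined with the hypothesis $\Delta u \le u$, and observing that $-\chi_{\{u<0\}} u = u_-$, this yields
\begin{equation*}
   (-\Delta + 1)u_- \le 0 \quad\text{distributionally on } M.
\end{equation*}
Thus $u_- \in L^2(M)$ is a non-negative distributional subsolution of $-\Delta + 1$.

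Next I would upgrade the regularity of $u_-$. Since $(-\Delta + 1)u_-$ is a non-positive Radon measure and $u_- \in L^2_{\loc}(M)$, standard local elliptic regularity, together with the kind of monotone approximation scheme the abstract advertises, produces $u_- \in W^{1,2}_{\loc}(M)$, so that $u_-$ is in fact a weak subsolution: for every non-negative $\varphi \in C^\infty_c(M)$,
\begin{equation*}
    \int_M g(\nabla u_-, \nabla \varphi) + u_- \varphi \le 0.
\end{equation*}
Testing against $\eta^2 u_-$ for a Lipschitz cutoff $\eta$ and absorbing the cross-term via Young's inequality yields the Caccioppoli-type estimate
\begin{equation*}
    \frac{1}{2}\int_M \eta^2 |\nabla u_-|^2 + \int_M \eta^2 u_-^2 \le 2\int_M |\nabla \eta|^2 u_-^2.
\end{equation*}
Here geodesic completeness enters decisively: it guarantees a family of cutoffs $\eta_R$ with $\eta_R \equiv 1$ on the geodesic ball $B_R$, $\supp \eta_R \subset B_{2R}$, and $|\nabla \eta_R| \le C/R$. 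Substituting $\eta_R$ and letting $R \to \infty$, the right-hand side tends to zero because $u_- \in L^2(M)$, which forces $u_- \equiv 0$ and hence $u \ge 0$ a.e.

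The main obstacle I expect is the regularity upgrade from distributional to weak, together with the distributional Kato inequality itself. A priori $u_-$ has no derivatives to test against, and one must approximate $|\cdot|$ by smooth convex functions and pass to the limit in a controlled way; this is precisely the kind of monotone approximation result the introduction announces as being of independent interest. Once $u_- \in W^{1,2}_{\loc}(M)$ is available, the cutoff step is essentially mechanical and makes transparent why geodesic completeness — the only hypothesis on the geometry — is exactly what is needed to close the argument.
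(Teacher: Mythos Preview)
The paper does not prove the BMS conjecture; it merely states it and attributes its resolution to Pigola and Veronelli \cite{PV2021}. The paper's own contributions are the $L^\infty$ case (\Cref{thm: main theorem introduction}) and the $L^1$ counterexample (\Cref{thm:L1 ppp intro}), so there is no in-paper proof to compare against.

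That said, your outline matches the Pigola--Veronelli strategy as the paper describes it: Brezis--Kato passes the inequality to $u_-$ (this is exactly \Cref{Thm:Kato}), a regularity step upgrades $u_-$, and then an $L^p$-Liouville property finishes. The paper quotes Yau's theorem for the last step when $p\in(1,\infty)$; your Caccioppoli--cutoff computation is a clean direct proof of that Liouville statement for $p=2$, and makes the role of completeness transparent.

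The one place to be careful is the regularity upgrade, which you rightly flag as the main obstacle. It is not ``standard local elliptic regularity'': measure data gives only $W^{1,q}_{\loc}$ for $q<n/(n-1)$. The monotone approximation of \Cref{thm:monotone approximation L intro} does close it, but note that it is designed to propagate $L^\infty$ control ($\sup_\Omega u_k\le 2\,\esup_\Omega u$), not $L^2$ control. One first observes that $u_-\ge 0$ with $\Delta u_-\ge u_-\ge 0$ is subharmonic, hence $L^\infty_{\loc}$ by the local sub-mean-value inequality; then the approximants $u_k$ inherit uniform $L^\infty(\Omega)$ bounds, the Caccioppoli estimate applied to each $u_k$ gives a uniform $W^{1,2}_{\loc}$ bound, and one passes to the limit. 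This is precisely the ``new regularity result for non-negative subharmonic distributions'' the paper attributes to \cite{PV2021}, so your sketch is on target, but the step is the heart of the matter rather than routine.
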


This conjecture has remained open for 20 years and has only recently been solved in the positive by Pigola and Veronelli in \cite{PV2021}.
For a complete introduction to the topic we refer to the nice survey \cite{Gu17BMS}, to Chapter XIV.5 of \cite{G2017} and Appendix B of \cite{BMS02}.

The case $p = + \infty$ of Definition \ref{def:L^p pp} is instead related to stochastic completeness. 
Recall that a manifold is said to be stochastically complete if the Brownian paths on $M$ have almost surely infinite lifetime or, equivalently, if the minimal positive heat kernel associated to the Laplace-Beltrami operator preserves probability.
For the scope of this article, however, we shall adopt the following (equivalent) definition, which is more relevant from the point of view of PDEs. 

\begin{defin}
\label{def:stoch compl}
A Riemannian manifold $(M,g)$ is said to be \textit{stochastically complete} if the only bounded, non-negative $C^2$ solution of $\Delta u\geq\ u$ on $M$ is $u \equiv 0$.
\end{defin}

There are countless characterizations of stochastic completeness, a comprehensive account is beyond the scope of this paper, we refer the reader to \cite{Gr1999, Gr2009, PRS2003, PRS2005} or the very recent \cite{GIM2020}. 
See also \Cref{sec: L infty pp and stoch compl} below.
Stochastic completeness is implied by several geometric, analytic and probabilistic conditions. For instance, stochastic completeness is ensured by conditions on the curvature tensor. 
In this direction, the most general result is the one of Hsu in \cite{Hsu1989}, a particular case of which states that geodesically complete manifold whose Ricci curvature satisfies  
\begin{equation*}
    \Ric(x) \ge -C r^2(x)
\end{equation*}
outside a compact set are in fact stochastically complete. 

As a matter of fact, the $L^p$-positivity preserving property implies stochastic completeness of the manifold at hand, as it has been observed by G\"uneysu in \cite{G2016}. 
In particular, stochastically incomplete manifolds provide counterexamples to the validity of the $L^\infty$-positivity preserving property. 
As an example, take a Cartan-Hadamard manifold whose Ricci curvature diverges at $-\infty$ faster than quadratically, for computations we refer to \cite{MV2021}.

In the last years there has been an effort to better understand the $L^p$-positivity preserving property and to find geometric and analytic conditions ensuring its validity.
If one takes $M = \R^n$ with the usual Euclidean metric, the $L^2$-positivity preserving property was first proved by Kato, \cite{K1972}, using the theory of operators on tempered distributions. 
In a Riemannian setting, however, one does not dispose of tempered distributions and it is thus necessary to take other paths. 
Following an idea of Davies in \cite{BMS02}, if the manifold admits a family of smooth cutoff functions with a good control on the Laplacian, it is possible to prove the $L^p$-positivity preserving property. 
In this direction we mention the results by Braverman, Milatovic and Schubin in \cite{BMS02}; by G\"uneysu in \cite{G2016, G2017}; by Bianchi and Setti in \cite{BS2018}; and by the second author and Veronelli in \cite{MV2021}. 
Using a completely different strategy, Pigola and Veronelli, \cite{PV2021}, were finally able to prove that the $L^p$-positivity preserving property for $p \in (1, + \infty)$ holds on geodesically complete manifolds, thus verifying that the BMS conjecture is true. 
\begin{remark}
\label{rmk:counterexample incomplete mfd}
Without geodesic completeness the $L^p$-positivity preserving property generally fails for every $p \in [1, +\infty]$. 
To see this, take $\mathbb{B}_1\subseteq \mathbb{R}^2$ the Euclidean open ball of radius $1$. 
Then, the radial function $u(r) = -r$, which belongs to all $L^p$ spaces, is a non-positive function which satisfies $(-\Delta + 1) u \le 0$. 
\end{remark}
The proof of Pigola and Veronelli uses some new regularity results for non-negative subharmonic distributions to prove that the $L^p$-positivity preserving property is implied by a Liouville-type property for $L^p$-subharmonic distributions. When $p\neq 1, +\infty$, this property is known to hold on geodesically complete manifolds thanks to a result of Yau, \cite{Y1976}. 
This strategy, however, fails when $p = 1$ or $p = +\infty$ since there are known counterexamples to the Liouville-type property.

\begin{remark}
\label{rmk:observation about result of Hsu}
To the best of our knowledge, when $p = +\infty$ the most general condition known so far ensuring the validity of the $L^\infty$-positivity preserving property is the one of Theorem II in \cite{MV2021}. 
This condition, which requires geodesic completeness and 
\begin{equation*}
    \Ric(x) \ge -Cr^2(x)
\end{equation*}
outside a compact set, is essentially the celebrated condition of Hsu, \cite{Hsu1989}, for stochastic completeness.
\end{remark}
The above observation suggests a much closer relation between stochastic completeness and the $L^\infty$-positivity preserving property.
The main result of this article is in fact the following:

\begin{mytheorem}
\label{thm: main theorem introduction}
Let $(M, g)$ be a Riemannian manifold, then $M$ has the $L^\infty$-positivity preserving property if and only if it is stochastically complete. 
\end{mytheorem}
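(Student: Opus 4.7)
One direction is essentially immediate from the definitions. If $M$ is not stochastically complete, \Cref{def:stoch compl} provides a nontrivial $v \in C^2(M) \cap L^\infty(M)$ with $v \geq 0$ and $\Delta v \geq v$. Setting $u := -v$, we obtain a function in $L^\infty(M)$ that is not non-negative almost everywhere, yet satisfies $(-\Delta + 1)u = \Delta v - v \geq 0$ pointwise. This gives the implication ``$L^\infty$-positivity preserving $\Rightarrow$ stochastically complete''.

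The substance of the theorem is the converse. Assume $M$ is stochastically complete and fix $u \in L^\infty(M)$ with $(-\Delta + 1) u \geq 0$ distributionally; I want to prove $u \geq 0$ a.e. My strategy, matching the abstract, is to reduce to the pointwise $C^2$ case via a monotone approximation and then invoke a maximum-principle characterization of stochastic completeness. More concretely, the first step is to produce a sequence $(u_k) \subseteq C^2(M) \cap L^\infty(M)$ with $\|u_k\|_\infty \leq \|u\|_\infty$, with $u_k \nearrow u$ a.e.\ (or at least in $L^1_{\loc}$), and with each $u_k$ solving the pointwise inequality $(-\Delta + 1)u_k \geq 0$. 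A natural construction uses an exhaustion of $M$ by relatively compact open sets $\Omega_j$ together with the Dirichlet resolvent of $-\Delta + 1$ on each $\Omega_j$: because $(-\Delta + 1)u$ is a positive Radon measure, one can represent $u$ as a limit of resolvents applied to truncations of this measure, and classical comparison principles for the resolvent should deliver both monotonicity and the two-sided $L^\infty$ bound.

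Once such an approximation is in hand, each $u_k$ is a bounded $C^2$ function with $\Delta u_k \leq u_k$, and stochastic completeness forces $u_k \geq 0$ as follows. By the weak maximum principle at infinity, which is a standard characterization of stochastic completeness for $C^2$ functions bounded below, there exists a sequence $x_n^{(k)} \in M$ with $u_k(x_n^{(k)}) \to \inf_M u_k$ and $\liminf_n \Delta u_k(x_n^{(k)}) \geq 0$. Combined with $\Delta u_k(x_n^{(k)}) \leq u_k(x_n^{(k)})$, this forces $\inf_M u_k \geq 0$. Letting $k \to \infty$ yields $u \geq 0$ a.e.

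The main obstacle is the monotone approximation in the first step, which is precisely what the abstract advertises as a separately interesting result. Mollification in local charts does not preserve the distributional inequality in a sign-compatible way on a manifold, since the Laplacian and convolution do not commute globally, so the approximation must be intrinsic to the operator $-\Delta + 1$; it is the $+1$ zeroth-order term that makes the global resolvent theory work and supplies the necessary comparison principles. Simultaneously preserving the pointwise supersolution property, the monotonicity in $k$, and the $L^\infty$ bound inherited from $u$ as the exhaustion grows is the delicate technical point on which the whole argument hinges.
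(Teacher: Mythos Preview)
The easy direction is correct and matches \Cref{prop: L infty implies stoch compl} verbatim.

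For the hard direction your overall philosophy is right, but the plan diverges from the paper in a way that creates a genuine gap. You aim to produce \emph{global} smooth approximants $u_k\in C^2(M)$ with $(-\Delta+1)u_k\ge 0$, $\|u_k\|_\infty\le\|u\|_\infty$ and $u_k\to u$, and then apply the weak maximum principle at infinity to each $u_k$. The problem is the global approximation step. The paper's monotone approximation (\Cref{thm:monotone approximation L intro}) is only \emph{local}: for each $\Omega\Subset M$ one gets $u_k\in C^\infty(\Omega)$, not on all of $M$; a global version (\Cref{Cor:GlobalNonParabolic}) is obtained only under the extra hypothesis of strong $\alpha$-non-parabolicity. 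Your suggested route via Dirichlet resolvents on an exhaustion does not obviously yield smooth functions (the right-hand side $(-\Delta+1)u$ is merely a measure), nor does it come with a clear mechanism for piecing the $\Omega_j$-solutions into a single $C^2(M)$ function with a uniform $L^\infty$ bound and the correct sign of $(-\Delta+1)$. You flag this as the delicate point, and indeed it is precisely where the argument as written does not close.

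The paper circumvents the need for a global approximation by a different architecture. First it applies a Brezis--Kato inequality (\Cref{Thm:Kato}) to pass from $u$ to $u_-$, which satisfies $\mathcal{L}u_-\ge 0$; then it uses the \emph{local} approximation of \Cref{thm:monotone approximation L intro} (proved via the Protter--Weinberger substitution $u=\alpha v$, Green-function mean-value formulae for $\Delta_\alpha$, and the Bonfiglioli--Lanconelli smoothing) to get, on each $\Omega_h$ of an exhaustion, a smooth $u_h$ with $u_-\le u_h\le 2\|u\|_\infty$ and $\mathcal{L}u_h\ge 0$. Next the sub/supersolution method (\Cref{thm:sub super solution}) upgrades each $u_h$ to an $\mathcal{L}$-\emph{harmonic} $w_h$ on $\Omega_h$ with the same bounds. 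Finally, because the $w_h$ solve $\mathcal{L}w_h=0$, Schauder estimates give local $C^k$ bounds uniform in $h$, and a diagonal compactness argument produces a single global $w\in C^\infty(M)$ with $0\le u_-\le w\le 2\|u\|_\infty$ and $\Delta w=w$; stochastic completeness then forces $w\equiv 0$. The two ideas you are missing are (i) passing to $u_-$ so that one only needs a supersolution \emph{above} it, and (ii) replacing the approximants by $\mathcal{L}$-harmonic functions so that elliptic compactness manufactures the global object for free.
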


\Cref{thm: main theorem introduction} together with the result of Pigola and Veronelli, \cite{PV2021}, give the full picture of the $L^p$-positivity preserving property when $p \in (1, + \infty]$.  
When $p = 1$, the best result we have is the one of the second author with Veronelli, \cite[Theorem II]{MV2021}, which ensures the $L^1$-positivity preserving property if the manifold is complete and the Ricci curvature essentially grows like 
\begin{equation*}
    \Ric(x) \ge -Cr^2(x)
\end{equation*}
outside of a compact set. 
Using a construction suggested to us by Veronelli we also prove the following:

\begin{mytheorem}
\label{thm:L1 ppp intro}
For every $\varepsilon > 0$, there exists a 2-dimensional Riemannian manifold $(M, g)$ whose Gaussian curvature satisfies 
\begin{equation*}
    K(x) \sim -C r(x)^{2+\varepsilon},
\end{equation*}
such that the $L^1$-positivity preserving property fails on $M$. 
Here $r(x)$ denotes the Riemannian distance from some fixed pole. 
\end{mytheorem}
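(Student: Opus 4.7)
The plan is to realise $M$ as a two-dimensional rotationally symmetric model and to exhibit on it an explicit distributional $L^1$ super-solution of $-\Delta+1$ which is not almost everywhere non-negative. Take $g_\sigma = dr^2 + \sigma(r)^2\,d\theta^2$ on $\mathbb{R}^2$, with $\sigma\in C^\infty([0,+\infty))$, $\sigma(0)=0$, $\sigma'(0)=1$, so that $M_\sigma$ is a genuine smooth 2-manifold with pole at the origin. The Gaussian curvature is $K(r)=-\sigma''(r)/\sigma(r)$, and the Riemannian measure is $\sigma(r)\,dr\,d\theta$. Writing $\sigma=e^\phi$, the prescription $K\sim -Cr^{2+\varepsilon}$ at infinity is equivalent to $(\phi')^2\sim Cr^{2+\varepsilon}$, i.e.\ $\phi(r)\sim \frac{\sqrt{C}}{2+\varepsilon/2}\,r^{2+\varepsilon/2}$; after choosing $C$ large and/or modifying $\sigma$ on a bounded interval, we may moreover arrange that $\sigma'/\sigma\ge r$ for every $r>0$, consistently with the asymptotic. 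Note that the resulting volume grows super-exponentially and that, by Grigor'yan's volume test, $M_\sigma$ is stochastically incomplete (which by \Cref{thm: main theorem introduction} already rules out the $L^\infty$-positivity preserving property).

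The guiding example is the incomplete Euclidean ball of \Cref{rmk:counterexample incomplete mfd}: the Lipschitz function $u_0(x)=a-r(x)$ satisfies, by a direct distributional computation on the model (paralleling the one giving $\Delta|x|=1/|x|$ in $\mathbb{R}^2$ with no singular contribution at the origin),
\begin{equation*}
    (-\Delta+1)u_0 \;=\; \frac{\sigma'(r)}{\sigma(r)} + (a-r),
\end{equation*}
which is non-negative thanks to $\sigma'/\sigma\ge r\ge r-a$, and it is strictly negative on the set $\{r>a\}$. The serious defect is that $u_0\notin L^1(M_\sigma)$ because of the super-exponential volume growth. The construction, suggested by Veronelli, then modifies $u_0$ at infinity so as to obtain an $L^1$ function while preserving both the sign change and the distributional inequality. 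Concretely, one composes $u_0$ (or a related Lipschitz profile) with a singular correction drawn from the Green's function $G(o,\cdot)$ of $-\Delta+1$: thanks to the very fast blowup of $\sigma'/\sigma$, the asymptotic analysis of the Sturm--Liouville equation $(\sigma f')' = \sigma f$ shows $G(o,\cdot)\in L^1(M_\sigma)$, and the positive Dirac contribution $\delta_o$ arising distributionally from $(-\Delta+1)G$ can be used to absorb any pointwise smooth deficit left after truncation of $u_0$.

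The main obstacle is exactly the verification of the distributional inequality $(-\Delta+1)u\ge 0$ after the surgery. A naive smooth compactly supported truncation cannot work: the strong maximum principle, applied at the interior minimum $x_0$ of such a truncation, would force
\begin{equation*}
    (-\Delta+1)u(x_0)\;\le\; u(x_0)\;<\;0,
\end{equation*}
contradicting the supposed positivity. The construction must therefore be non-smooth, placing the minimum of $u$ at a point where the distributional Laplacian contributes a singular non-negative measure, or exploiting the Lipschitz regularity of $a-r(x)$ at the pole together with the integrability of $G$. The quantitative leverage needed to carry out this balancing—relative sizes of $\sigma'/\sigma$, $r$, the cutoff scale and the $L^1$-mass of $G$—is provided by the super-quadratic decay of the curvature, which is where the hypothesis $\varepsilon>0$ enters essentially. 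Once the construction is in place, the resulting $u\in L^1(M_\sigma)$ with $(-\Delta+1)u\ge 0$ distributionally and $u<0$ on a set of positive measure furnishes the required failure of the $L^1$-positivity preserving property.
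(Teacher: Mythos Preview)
Your proposal has a genuine gap: you never actually construct the function $u$. You set up a model with super-exponentially growing warping $\sigma=e^\phi$, write down the candidate $u_0(x)=a-r(x)$, correctly observe that $u_0\notin L^1$, and then describe in outline that one should ``modify $u_0$ at infinity'' using the Green function of $-\Delta+1$. But you explicitly flag the verification of $(-\Delta+1)u\ge 0$ after this surgery as ``the main obstacle'' and do not carry it out. The closing sentence ``Once the construction is in place\ldots'' is an acknowledgement that the proof is unfinished.

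The difficulty you run into is an artifact of a choice you made at the very first step. The equation $K=-\sigma''/\sigma\sim -Cr^{2+\varepsilon}$ has \emph{two} asymptotic regimes for $\sigma$: writing $\sigma=e^{\phi}$ gives $K=-(\phi''+(\phi')^2)$, so both $\phi\sim +c\,r^{2+\varepsilon/2}$ and $\phi\sim -c\,r^{2+\varepsilon/2}$ produce the same curvature growth. You took $\phi\to+\infty$, forcing super-exponential volume growth and hence the hard truncation problem. The paper takes the opposite sign: it sets (after relabelling the exponent)
\[
\sigma(t)=\frac{e^{-t^{2+2\varepsilon}}}{t^{1+\varepsilon}}\qquad\text{for }t\text{ large,}
\]
so that the model has \emph{finite volume}. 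Then the counterexample is completely explicit:
\[
U(t,\theta)=\bigl(e^{t^{2+2\varepsilon}}-e^{t_\varepsilon^{2+2\varepsilon}}\bigr)_+,
\]
which satisfies $\Delta U\ge U$ by a short direct computation (the distributional contribution at the corner $t=t_\varepsilon$ has the right sign because $u'(t_\varepsilon^+)>0$), and $U\in L^1(M)$ because $U(t)\,\sigma(t)\sim t^{-1-\varepsilon}$. Setting $V=-U$ gives $V\in L^1$, $(-\Delta+1)V\ge 0$, $V\le 0$, and the $L^1$-positivity preserving property fails. No Green-function surgery or delicate truncation is needed; the integrability comes for free from the tiny volume element. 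Incidentally, this finite-volume model is stochastically complete, so your remark that stochastic incompleteness ``already rules out the $L^\infty$-positivity preserving property'' via \Cref{thm: main theorem introduction}, while correct on your model, is not part of the paper's argument and is in fact irrelevant to the $L^1$ case.
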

\begin{remark}
\Cref{thm:L1 ppp intro} together with \Cref{rmk:counterexample incomplete mfd} show that the result of Theorem II in \cite{MV2021} alluded in the above is optimal. 
\end{remark}
\begin{remark}
Using a simple trick introduced in \cite{HMRV2021}, the counterexample in dimension $2$ of \Cref{thm:L1 ppp intro} can be used to construct counterexamples to the $L^1$-positivity preserving property in arbitrary dimensions $n \geq 2$. 
It suffices to take the product of the 2 dimensional model manifold $M$ with an arbitrary $n-2$ dimensional closed Riemannian manifold. 
Extending the function which provides the counterexample on $M$ to the whole product produces a counterexample in a manifold of dimension $n$. 
\end{remark}

In order to prove that stochastic completeness implies the $L^p$-positivity preserving property, we show that it is essentially a problem of regularity for the distributional, $L^\infty$ solutions of $\mathcal{L} u \geq 0$, where $\mathcal{L}$ is defined in \eqref{eq:schrodinger operator L}. 
Using a Brezis-Kato inequality we reduce ourselves to prove the following:
\begin{myproposition}
\label{prop: Claim in proof of thm A}
Let $(M, g)$ be a Riemannian manifold and let $u \in L^\infty(M)$ satisfying $\mathcal{L} u \geq 0$ in the sense of distributions.
Then, there exists some $w \in C^{\infty}(M)$ with $\sup_M w < + \infty$ such that $u \leq w$ and $\mathcal{L} w \geq 0$ in the strong sense. 
\end{myproposition}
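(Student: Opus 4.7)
The plan is to construct $w$ as the pointwise monotone limit of solutions of auxiliary Dirichlet problems on an exhaustion of $M$. Fix a smooth exhaustion $\{\Omega_k\}_k$ of $M$ by relatively compact open sets with smooth boundary, set $C := \|u\|_{L^\infty(M)}$, and let $w_k \in C^\infty(\overline{\Omega_k})$ be the unique classical solution of
\[
(-\Delta + 1) w_k = 0 \text{ in } \Omega_k, \qquad w_k = C \text{ on } \partial\Omega_k.
\]
The weak maximum principle gives $0 \le w_k \le C$, and comparing $w_k$ and $w_{k+1}$ on $\Omega_k$ (using $w_{k+1}\le C = w_k$ on $\partial\Omega_k$ and the fact that both satisfy the equation) yields $w_{k+1} \le w_k$. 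Hence $w := \lim_k w_k$ exists pointwise with $0 \le w \le C$, and interior Schauder estimates upgrade the convergence to $C^\infty_{\loc}(M)$. Consequently $w \in C^\infty(M)$ satisfies $\mathcal{L}w = 0$ in the strong sense, so in particular $\sup_M w \le C < +\infty$ and $\mathcal{L}w \ge 0$ pointwise.

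The heart of the argument is the comparison $u \le w_k$ a.e.\ on $\Omega_k$, which then passes to the limit to yield $u \le w$ a.e.\ on $M$. Set $v := u - w_k$; combining $\mathcal{L}u \ge 0$ distributionally with $\mathcal{L}w_k = 0$ gives $\Delta v \ge v$ distributionally on $\Omega_k$, so $\Delta v$ is a signed Radon measure and Kato's inequality is available. For $\delta > 0$ set $f := (v - \delta)^+$. Since $w_k$ is continuous up to $\partial\Omega_k$ with $w_k = C \ge u$ there, $f$ vanishes in an open neighborhood of $\partial\Omega_k$ and hence $K := \supp f$ is a compact subset of $\Omega_k$. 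Applying Kato's inequality to $v - \delta$ together with $\Delta v \ge v$ yields
\[
\Delta f \ge \chi_{\{v > \delta\}} \Delta v \ge \chi_{\{v > \delta\}} v = f + \delta\,\chi_{\{v > \delta\}},
\]
so $(-\Delta + 1) f \le 0$ distributionally on $\Omega_k$. Extending $f$ by zero outside $\Omega_k$, the compactness $\supp f \subset\subset \Omega_k$ together with a cutoff argument promotes this distributional inequality to the whole of $M$.

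To conclude $f \equiv 0$, pick any $\varphi \in C^\infty_c(M)$ with $\varphi \ge 0$ and $\varphi \equiv 1$ on an open neighborhood of $K$; then $\Delta \varphi$ vanishes on $\supp f$ and $\varphi \equiv 1$ on $\supp f$, so testing $(-\Delta + 1)f \le 0$ against $\varphi$ yields
\[
\int_M f \;=\; \int_M f\,(-\Delta + 1)\varphi \;\le\; 0,
\]
which combined with $f \ge 0$ forces $f = (v - \delta)^+ \equiv 0$ a.e. Letting $\delta \downarrow 0$ gives $u \le w_k$ a.e. The principal obstacle is precisely this comparison step: because $u$ is only $L^\infty$ it admits no trace on $\partial\Omega_k$ and the classical maximum principle is unavailable, while invoking an $L^2$-positivity-preserving-type statement would be circular in view of \Cref{thm: main theorem introduction}. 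The $\delta$-shift circumvents both difficulties by producing a truncation compactly supported strictly inside $\Omega_k$, against which a plateau test function directly produces the needed integral estimate.
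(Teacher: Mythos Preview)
Your argument is correct and follows a genuinely different route from the paper. The paper first proves the monotone approximation result \Cref{thm:monotone approximation L intro} via potential theory for the drifted Laplacian $\Delta_\alpha$ (mean value formulae, the Bonfiglioli--Lanconelli averaging construction, the Sj\"ogren decomposition), then uses it to produce a smooth subsolution $u_\Omega \ge u$ on each relatively compact $\Omega$ (\Cref{cor:CLAIM 2}), upgrades to an $\mathcal{L}$-harmonic function via the sub/super\-solution method (\Cref{thm:sub super solution}), and finally extracts a global $w$ by Schauder compactness. You bypass the whole approximation machinery: you build $w_k$ directly as the Dirichlet solution of $\mathcal{L}w_k = 0$ on $\Omega_k$ with constant boundary datum $C = \|u\|_{L^\infty}$, obtain monotonicity and the limit $w$ exactly as the paper does in its final compactness step, and reduce the proof to the comparison $u \le w_k$. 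The $\delta$-shift trick---which forces $(u - w_k - \delta)^+$ to have compact support strictly inside $\Omega_k$ because $w_k \to C$ continuously at the boundary---is the key idea that makes the comparison work without any boundary trace for $u$, and the plateau test then kills $f$ immediately. This is shorter and more elementary for the purpose of \Cref{prop: Claim in proof of thm A}; what the paper's longer route buys is \Cref{thm:monotone approximation L intro} itself (and its global version \Cref{Cor:GlobalNonParabolic}), which are advertised as results of independent interest.

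One small comment on the Kato step: writing $\Delta f \ge \chi_{\{v>\delta\}}\Delta v$ with $\Delta v$ a signed measure requires some care in interpreting the product. You do not actually need this sharp form. Since $\mathcal{L}v \ge 0$ distributionally, one has $\mathcal{L}(v-\delta) = \mathcal{L}v + \delta \ge \delta > 0$, and then the paper's own Brezis--Kato inequality (\Cref{Thm:Kato}) applied to $v-\delta$ yields directly $\mathcal{L}f = \mathcal{L}(v-\delta)_+ \ge 0$, which is exactly the inequality you feed into the plateau test. With this cosmetic adjustment the argument is clean.
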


This latter result follows from a monotone approximation theorem for the distributional solutions of $\mathcal{L} u \ge 0$ which is of independent interest. 

\begin{mytheorem}
\label{thm:monotone approximation L intro}
Let $(M, g)$ be a Riemannian manifold and let $u \in L^1_{\loc}(M)$ be a solution of $\mathcal{L}u \ge 0$ in the sense of distributions. Then for every $\Omega \Subset M$ there exists a sequence $\lbrace u_k \rbrace \subset C^\infty(\Omega)$ such that:
\begin{enumerate}[label = (\roman*)]
    \item $u_k \searrow u$ pointwise a.e.;
    \item $\mathcal{L}u_k \ge 0$ for all $k$;
    \item $u_k \to u$ in $L^1(\Omega)$;
    \item $\sup_\Omega u_k \leq 2 \esup_\Omega u$.
\end{enumerate}
\end{mytheorem}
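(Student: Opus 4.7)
My plan is to regularize $u$ via an exponentially rescaled Dirichlet heat semigroup on a slightly larger domain. Pick $\Omega'$ with smooth boundary satisfying $\Omega \Subset \Omega' \Subset M$, let $P_t^{\Omega'}$ denote the Dirichlet heat semigroup on $\Omega'$ with smooth positive kernel $p_t^{\Omega'}$, and set
\[
v_t(x) := e^{-t} \bigl(P_t^{\Omega'} u\bigr)(x), \qquad t > 0,\ x \in \Omega'.
\]
The factor $e^{-t}$ is tailored to the zeroth order term of $\mathcal{L} = \Delta - 1$: a direct computation based on $\partial_s P_s^{\Omega'} f = \Delta P_s^{\Omega'} f$ yields $\partial_t v_t = \mathcal{L} v_t$ on $\Omega'$, so the pointwise inequality $\mathcal{L} v_t \ge 0$ in (ii) is equivalent to the map $t \mapsto v_t$ being non-decreasing.

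To prove this monotonicity, I would use that $\mathcal{L}u \ge 0$ in the distributional sense means $\mathcal{L}u$ is a positive Radon measure on $\Omega'$; applying the heat semigroup to such a measure yields a non-negative smooth function, so formally $\partial_t v_t = e^{-t} P_t^{\Omega'}(\mathcal{L}u) \ge 0$, which is (ii). Once this is in place, the classical $L^1$-continuity $P_t^{\Omega'} u \to u$ in $L^1(\Omega')$ as $t \to 0^+$, combined with the monotonicity in $t$, yields both the pointwise decreasing convergence $v_t \searrow u$ a.e.\ required for (i) and the $L^1$-convergence in (iii).

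For the sup bound (iv), I would exploit the off-diagonal decay of the Dirichlet heat kernel: since for $x \in \Omega$ and $y \in \Omega' \setminus \Omega$ the distance $d(x,y)$ is uniformly bounded below, $p_t^{\Omega'}(x,y) \le C e^{-c/t}$ for some $c, C > 0$ depending on $\Omega, \Omega'$. Splitting the defining integral of $v_t(x)$ into pieces over $\Omega$ and $\Omega' \setminus \Omega$, the first part is bounded by $\esup_\Omega u$ and the second by $C \|u\|_{L^1(\Omega')} e^{-c/t}$. Choosing $t_k \searrow 0$ slowly enough for the tail to be controlled then gives $\sup_\Omega v_{t_k} \le 2 \esup_\Omega u$, and $u_k := v_{t_k}$ is the desired sequence.

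The main technical obstacle is rigorously justifying $\partial_t v_t = e^{-t} P_t^{\Omega'}(\mathcal{L}u)$ when $\mathcal{L}u$ is merely a positive Radon measure: it amounts to commuting the distributional operator $\mathcal{L}$ with the semigroup $P_t^{\Omega'}$, which I would handle by duality, testing the identity against smooth compactly supported functions and exploiting the smoothness and symmetry of $p_t^{\Omega'}$.
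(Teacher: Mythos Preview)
Your approach has a genuine gap: the claimed identity $\partial_t v_t = e^{-t}P_t^{\Omega'}(\mathcal{L}u)$ is \emph{false} for the Dirichlet semigroup. While $\partial_t v_t = \mathcal{L}v_t$ is correct, commuting $\mathcal{L}$ through $P_t^{\Omega'}$ requires $u$ to lie in the domain of the Dirichlet Laplacian; for general $u$ one picks up the boundary term $\int_{\partial\Omega'} u(y)\,\partial_{\nu_y} p_t^{\Omega'}(x,y)\,d\sigma_y$, and since the outward normal derivative of the Dirichlet kernel is $\le 0$, this term has the wrong sign whenever $u>0$ on $\partial\Omega'$. Your duality argument does not escape this: it leads you to pair the measure $\mathcal{L}u$ with $P_t^{\Omega'}\varphi$, which vanishes on $\partial\Omega'$ but is not compactly supported there, and approximating it by $C_c^\infty$ functions reproduces exactly the same boundary contribution. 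Concretely, take $M=\mathbb{R}$, $\Omega'=(-1,1)$, $u=\cosh$, so $\mathcal{L}u=0$. Since $p_t^{\Omega'}<p_t^{\mathbb{R}}$ pointwise and $P_t^{\mathbb{R}}\cosh=e^{t}\cosh$, one gets
\[
v_t(x)=e^{-t}P_t^{\Omega'}\cosh(x)<e^{-t}P_t^{\mathbb{R}}\cosh(x)=\cosh(x)=u(x)
\]
for every $x\in\Omega'$ and $t>0$. Thus (i) fails---you approximate from \emph{below}, not above---and since $v_t\to 0$ as $t\to\infty$ while $v_0=u>0$, one has $\mathcal{L}v_t=\partial_t v_t<0$ somewhere, so (ii) fails too. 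Restricting to $\Omega\Subset\Omega'$ does not help: the strict inequality $v_t<u$ holds throughout $\Omega'$.

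The paper avoids this boundary obstruction by a different route. It first removes the zeroth-order term: fixing $\alpha>0$ smooth with $\mathcal{L}\alpha=0$, the Protter--Weinberger substitution converts $\mathcal{L}u\ge 0$ into $\Delta_\alpha(u/\alpha)\ge 0$ for the drifted Laplacian $\Delta_\alpha=\alpha^{-2}\Div(\alpha^2\nabla\,\cdot\,)$. The monotone approximation for $v=u/\alpha$ is then built, following Bonfiglioli--Lanconelli, from surface averages $m_r(v)(x)=\int_{\partial\mathcal{B}_r(x)} v\,|\nabla G|\,\alpha^2\,d\sigma$ over the super-level sets $\mathcal{B}_r(x)=\{G(x,\cdot)>1/r\}$ of the Dirichlet Green function of $\Delta_\alpha$ on $\Omega$; a further mollification in $r$ yields smoothness. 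Because these sets shrink to $\{x\}$ as $r\to 0$ and are compactly contained in $\Omega$, the boundary $\partial\Omega$ is never felt and the sub-mean-value inequality $v\le m_r(v)$ gives approximation from above with the correct sign on $\Delta_\alpha$.
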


Using a trick due to Protter and Weinberger, \cite{PW1984}, it is sufficient to prove a monotone approximation result for the distributional solution of $\Delta_\alpha v \geq 0$, where $\Delta_\alpha v\coloneqq \alpha^{-2} \Div(\alpha^2 \nabla v)$ and $\alpha$ is a smooth positive function to be specified later. 
The monotone approximation for the weighted Laplacian is obtained using a strategy outlined by Bonfiglioli and Lanconelli in \cite{bonfiglioli2013subharmonic} together with some mean value representation formulas for the solution of $\Delta_\alpha v = 0$. 
\Cref{thm:monotone approximation L intro} generalizes a result of Pigola and Veronelli in \cite{PV2021} where the monotone approximation was proved only on coordinate charts. 
If the manifold at hand admits a minimal, positive Green function for the operator $\Delta_\alpha$ (i.e. it is $\alpha$-non-parabolic) and if this Green function vanishes at infinity (i.e. it is strongly $\alpha$-non-parabolic), as a byproduct of the proof of \Cref{thm:monotone approximation L intro} we obtain a global, monotone approximation result. 

\begin{mycorollary}
\label{Cor:GlobalNonParabolic}
Let $(M, g)$ be a strongly $\alpha$-non-parabolic Riemannian manifold and let $u \in L^1_{\loc}(M)$ be a solution of $\mathcal{L}u \ge 0$ in the sense of distributions. Then there exists a sequence $\lbrace u_k \rbrace \subset C^\infty(M)$ such that:
\begin{enumerate}[label = (\roman*)]
    \item $u_k \searrow u$ pointwise a.e.;
    \item $\mathcal{L}u_k \ge 0$ for all $k$;
    \item $u_k \to u$ in $L^1(M)$;
    \item $\sup_M u_k \leq 2 \esup_M u$.
\end{enumerate}
\end{mycorollary}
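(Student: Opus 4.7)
The plan is to carry out, globally, the very construction that proves the local \Cref{thm:monotone approximation L intro}. Indeed, the only reason that theorem is stated on $\Omega \Subset M$ is that the Bonfiglioli--Lanconelli mean-value construction used there relies on the Dirichlet Green function of a precompact domain; in the strongly $\alpha$-non-parabolic setting the global minimal positive Green function $G_\alpha$ of $\Delta_\alpha$ supplies a direct, global substitute.

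Following the Protter--Weinberger reduction invoked just after \Cref{thm:monotone approximation L intro}, it suffices to prove the analogous monotone approximation for $v \in L^1_{\loc}(M)$ satisfying $\Delta_\alpha v \geq 0$ distributionally, with $u = \alpha v$ for an appropriate smooth positive $\alpha$. A smooth sequence $v_k \searrow v$ with $\Delta_\alpha v_k \geq 0$ and the analogues of (iii)--(iv) then yields, upon multiplication by $\alpha$, a sequence $u_k$ with the four properties of the corollary.

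The main step is to rerun the mean-value construction of \Cref{thm:monotone approximation L intro} with $G_\alpha(x, \cdot)$ in place of the local Dirichlet Green function. Strong $\alpha$-non-parabolicity guarantees both its existence and the fact that its super-level sets
\begin{equation*}
    \Omega_s(x) \coloneqq \{y \in M : G_\alpha(x, y) > 1/s\}
\end{equation*}
are relatively compact and exhaust $M$ as $s \to \infty$. The resulting Bonfiglioli--Lanconelli smoothing kernels $k_s(x, \cdot)$ are supported in $\Omega_s(x)$ and are smooth in $x$, and for any diverging sequence $s_k \nearrow \infty$ the functions
\begin{equation*}
    v_k(x) \coloneqq \int_M v(y)\, k_{s_k}(x, y)\, d\mu(y)
\end{equation*}
are smooth on \emph{all} of $M$. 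Smoothness, the $\Delta_\alpha$-subharmonicity of $v_k$, the monotone a.e.\ convergence $v_k \searrow v$, the $L^1_{\loc}$ convergence, and the quantitative sup-bound then follow verbatim from the proof of \Cref{thm:monotone approximation L intro}; the $L^1(M)$ statement is upgraded from $L^1_{\loc}$ by means of the uniform bound in (iv) and dominated convergence.

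The main obstacle I anticipate is bookkeeping rather than conceptual: one has to check that the estimates producing the factor $2$ in property (iv) of \Cref{thm:monotone approximation L intro} do not degrade when the Dirichlet Green function is replaced by $G_\alpha$, and that the smoothing kernels remain well-behaved up to $\partial \Omega_s(x)$ as $s \to \infty$. This is precisely the reason why \emph{strong} $\alpha$-non-parabolicity (rather than mere $\alpha$-non-parabolicity) is required: the vanishing of $G_\alpha$ at infinity is exactly what ensures that the super-level sets $\Omega_s(x)$ exhaust $M$ and that the local construction globalises cleanly.
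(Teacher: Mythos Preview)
Your proposal is correct and follows essentially the same route as the paper. The paper does not give a separate proof of the corollary; it simply remarks (Section~3.5) that the entire construction behind \Cref{thm:monotone approximation L intro} globalises once the Dirichlet Green function on $\Omega$ is replaced by the minimal positive Green function of $\Delta_\alpha$ on $M$, the only requirement being that the super-level sets $\mathcal{B}_r(x)=\{G_\alpha(x,\cdot)>1/r\}$ be relatively compact---which is precisely what strong $\alpha$-non-parabolicity provides. One small slip: in the Bonfiglioli--Lanconelli construction the approximation $v_k$ arises by letting the mean-value radius \emph{shrink} (the convolution $k\int\varphi(ks)m_s(v)\,ds$ concentrates at $s=0$ as $k\to\infty$), not by taking $s_k\nearrow\infty$; the exhaustion property of the $\Omega_s(x)$ is not what drives the convergence, only their relative compactness matters.
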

\begin{remark}
Results such as \Cref{prop: Claim in proof of thm A}, \Cref{thm:monotone approximation L intro} and \Cref{Cor:GlobalNonParabolic} still hold if the constant $1$ in the operator \eqref{eq:schrodinger operator L} is replaced by another positive constant. 
Actually, negative constants are also allowed as long as $-\mathcal{L}$ remains a positive operator. 
\end{remark}

The paper is organized as follows. 
In \Cref{sec: L infty pp and stoch compl} we study the relation between the $L^\infty$-positivity preserving property and stochastic completeness, showing that the former property implies the latter and the converse is true up to a claim which is proved later on. 
\Cref{sec:monotone approximation} is devoted to the monotone approximation results.
We first observe that the conclusions of \Cref{thm:monotone approximation L intro} can be inferred form an equivalent statement for the operator $\Delta_\alpha$. 
We prove some mean value representation formulae for $\alpha$-harmonic functions and show how these can be used to produce a monotone approximating sequence with wanted properties. 
As a corollary of Theorem \ref{thm:monotone approximation L intro}, we obtain the desired claim which concludes the proof of Theorem \ref{thm: main theorem introduction}. 
We end the section by observing that if we make some assumptions on the geometry of $M$, the monotone approximation results have a global nature. 
Finally, in \Cref{sec:counterexample L1} we construct a class of Riemannian manifolds on which the $L^1$-positivity preserving property fails thus proving \Cref{thm:L1 ppp intro}. 

\section{\texorpdfstring{$L^\infty$}{L infinity}-positivity preserving property and stochastic completeness}
\label{sec: L infty pp and stoch compl}

The aim of this section is to investigate the connection between the $L^\infty$-positivity preserving property and stochastic completeness. 
As pointed out in the introduction, there are several possible definitions one can give for stochastic completeness. 
We cite here the ones relevant to our exposition. 
\begin{enumerate}[label = (\roman*)]
\item for every $\lambda > 0$, the only bounded, non-negative $C^2$ solution of $\Delta u \geq \lambda u$ is $u \equiv 0$;
\item for every $\lambda > 0$, the only bounded, non-negative $C^2$ solution of $\Delta u = \lambda u$ is $u \equiv 0$;
\item the only bounded, non-negative $C^2$ solution of $\Delta u = u$ is $u \equiv 0$.
\end{enumerate}
For a proof of the equivalence we refer to Theorem 6.2 in \cite{Gr1999}. 
\begin{remark}
\label{rmk:observation on regularity of stoch completeness}
Note that the regularity required in the above and in \Cref{def:stoch compl} can be relaxed to $C^0(M) \cap W^{1, 2}_{\loc}(M)$ see for instance Section 2 of \cite{AMR2016}.
This fact is a consequence of a stronger version of \Cref{thm:sub super solution} below. 
\end{remark}

We begin with the following observation due to G\"uneysu, \cite{G2016}.

\begin{prop}
\label{prop: L infty implies stoch compl}
If $(M,g)$ has the $L^\infty$-positivity preserving property, then it is stochastically complete.
\begin{proof}
To see this, take $u \in C^2(M)$ a bounded and non-negative function satisfying $\Delta u \ge u$. 
Then, if we set $v = -u$ we have 
\begin{equation*}
    v \in L^\infty(M) \qquad (-\Delta + 1) v \ge 0.
\end{equation*}
By the $L^\infty$-positivity preserving property, we conclude that $v \geq 0$, since $u$ is non-negative, this yields $v \equiv 0$ and hence $u \equiv 0$. 
\end{proof}
\end{prop}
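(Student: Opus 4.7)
The statement is essentially a matter of unpacking the definitions and exploiting a sign flip. My plan is to take any $u \in C^2(M)$ that is bounded, non-negative, and satisfies $\Delta u \geq u$ (so that verifying $u \equiv 0$ gives stochastic completeness in the sense of \Cref{def:stoch compl}) and feed $-u$ into the $L^\infty$-positivity preserving hypothesis.

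Concretely, set $v := -u$. Since $u$ is bounded, $v \in L^\infty(M)$, and since $u \in C^2(M)$ with $\Delta u \geq u$, we have $(-\Delta + 1)v = \Delta u - u \geq 0$ pointwise on $M$. As noted in the introduction, a classical supersolution is automatically a distributional one (integrating against a non-negative $\varphi \in C^\infty_c(M)$ and applying the divergence theorem, which is legitimate since $\varphi$ is compactly supported), so $v$ solves $(-\Delta + 1)v \geq 0$ distributionally. The $L^\infty$-positivity preserving property then forces $v \geq 0$ a.e., i.e., $u \leq 0$ a.e. Combined with $u \geq 0$ everywhere and the continuity of $u$, we conclude $u \equiv 0$.

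I anticipate no real obstacle here: the argument is just a one-line reformulation. The only point to watch is that the definition of stochastic completeness adopted by the paper (\Cref{def:stoch compl}) is phrased via the inequality $\Delta u \geq u$ rather than the equality $\Delta u = u$ or a parameter-dependent version $\Delta u = \lambda u$, but these are all equivalent by item (i)--(iii) of the list opening \Cref{sec: L infty pp and stoch compl}, so working directly with the inequality version is the most convenient choice.
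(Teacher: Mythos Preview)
Your proof is correct and follows the same approach as the paper's: set $v=-u$, observe $v\in L^\infty(M)$ with $(-\Delta+1)v\ge 0$, apply the $L^\infty$-positivity preserving property to get $v\ge 0$, and combine with $u\ge 0$ to conclude $u\equiv 0$. You include a few extra justifications (the passage from strong to distributional solutions, continuity to upgrade from a.e.\ to everywhere), but the argument is otherwise identical.
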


\begin{remark}
\label{rmk: stochastical vs geodesic completeness}
It is worthwhile noticing that stochastic completeness is in general unrelated to geodesic completeness. It is possible to find Riemannian manifolds which are geodesically but not stochastically complete such as Cartan-Hadamard manifolds whose Ricci curvature diverges at $-\infty$ faster that quadratically. 
On the other hand, $\R^n \setminus \lbrace 0 \rbrace$ endowed with the Euclidean metric is stochastically complete but geodesically incomplete. 
\end{remark}

Proposition \ref{prop: L infty implies stoch compl} and the above remark explain the failure of the result of Pigola and Veronelli, \cite{PV2021}, in the case $p = \infty$. 

\subsection{From stochastic completeness to the \texorpdfstring{$L^\infty$}{L infinity}-positivity preserving property}
The goal of this section is to set the ground towards proving the converse of \Cref{prop: L infty implies stoch compl}. 

To this end, let $(M, g)$ be a stochastically complete Riemannian manifold and take $u \in L^\infty(M)$ satisfying $(-\Delta + 1) u \ge 0$ in the sense of distributions. 
Our purpose is to show that $u$ is non-negative almost everywhere or, equivalently, that the negative part $u_- = \max\lbrace 0, -u\rbrace = (-u)_+$ vanishes a.e.. 
The next ingredient in our proof is the following Brezis-Kato inequality due to Pigola and Veronelli, \cite[Proposition 4.1]{PV2021}

\begin{thm}[Brezis-Kato]
\label{Thm:Kato}
Given a Riemannian manifold $(M,g)$, if $u\in L^1_{\loc}(M)$ satisfies $\mathcal{L}u \ge 0$ in the sense of distributions, then $u_+\in L^1_{\loc}(M)$ and $\mathcal{L}u_+ \ge 0$ in the sense of distributions.
\end{thm}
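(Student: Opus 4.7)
The plan is to regularize $u$ locally by mollification and pass to the limit through a smooth convex approximation of $s \mapsto s_+$. Local integrability $u_+ \in L^1_{\loc}(M)$ is immediate from $0 \le u_+ \le |u|$, so the content is the distributional inequality $\mathcal{L}u_+ \ge 0$. The hypothesis $\mathcal{L}u \ge 0$ lets us write $\Delta u = u\,d\mu_g + \nu$ with $\nu$ a non-negative Radon measure on $M$, and this decomposition is the key structural input to exploit.

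Since the target inequality is local, fix a non-negative $\varphi \in C^\infty_c(M)$ and, by a partition of unity, assume $\supp \varphi$ lies in a single coordinate chart $U$. In $U$, let $u_\epsilon = u \ast \rho_\epsilon$ be the mollification in local coordinates and $\nu_\epsilon = \nu \ast \rho_\epsilon \ge 0$ the analogous smoothing of $\nu$; Friedrichs' commutator lemma applied to $\Delta$ in smooth coordinates gives
$$
\Delta u_\epsilon \;=\; u_\epsilon + \nu_\epsilon + r_\epsilon, \qquad r_\epsilon \xrightarrow[\epsilon\to 0]{L^1_{\loc}} 0.
$$
Choose $F_\epsilon \colon \R \to [0,\infty)$ smooth, convex and non-decreasing with $0 \le F_\epsilon' \le 1$, $F_\epsilon'' \ge 0$, $F_\epsilon(s) \to s_+$ and $F_\epsilon'(s) \to \chi_{\{s > 0\}}$ pointwise as $\epsilon \to 0$. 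The chain rule for the smooth composition $F_\epsilon \circ u_\epsilon$ gives pointwise on $U$
$$
\Delta\bigl(F_\epsilon(u_\epsilon)\bigr) \;\ge\; F_\epsilon'(u_\epsilon)\,\Delta u_\epsilon \;=\; F_\epsilon'(u_\epsilon)\,u_\epsilon + F_\epsilon'(u_\epsilon)\,\nu_\epsilon + F_\epsilon'(u_\epsilon)\,r_\epsilon \;\ge\; F_\epsilon'(u_\epsilon)\,u_\epsilon + F_\epsilon'(u_\epsilon)\,r_\epsilon,
$$
the last step using $F_\epsilon'(u_\epsilon)\,\nu_\epsilon \ge 0$. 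Pairing with $\varphi$ and integrating by parts,
$$
\int_U F_\epsilon(u_\epsilon)\,\Delta\varphi\,d\mu_g \;\ge\; \int_U \varphi\,F_\epsilon'(u_\epsilon)\,u_\epsilon\,d\mu_g + \int_U \varphi\,F_\epsilon'(u_\epsilon)\,r_\epsilon\,d\mu_g.
$$
Sending $\epsilon \to 0$, the left-hand side tends to $\int u_+\,\Delta\varphi\,d\mu_g$ by $L^1_{\loc}$-convergence $F_\epsilon(u_\epsilon) \to u_+$; the first right-hand term tends to $\int \varphi\,u_+\,d\mu_g$ by dominated convergence, since $F_\epsilon'(u_\epsilon)\,u_\epsilon \to \chi_{\{u > 0\}} u = u_+$ almost everywhere with uniform bound $|F_\epsilon'(u_\epsilon)\,u_\epsilon| \le |u_\epsilon|$; the second vanishes because $|F_\epsilon'(u_\epsilon)\,r_\epsilon| \le |r_\epsilon| \to 0$ in $L^1_{\loc}$. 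This delivers $\int u_+\,\Delta\varphi \ge \int \varphi\,u_+$, i.e.\ $\mathcal{L}u_+ \ge 0$ distributionally.

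The main obstacle is controlling the commutator term $r_\epsilon$: since $\Delta u$ is only a Radon measure, and in coordinates the Laplace-Beltrami operator differs from the Euclidean Laplacian by smooth-coefficient first- and zeroth-order terms, one needs a version of Friedrichs' lemma suited to a second-order elliptic operator acting on distributions whose image is a measure. A convenient workaround, if a direct invocation is delicate, is to first improve the regularity of $u$ via elliptic estimates applied to $\Delta u = u + \nu$ (obtaining, e.g., $u \in W^{1,q}_{\loc}$ for $q < n/(n-1)$), after which the chain-rule identity $\nabla u_+ = \chi_{\{u > 0\}}\nabla u$ is directly applicable and one can reach the same conclusion sidestepping the commutator analysis entirely.
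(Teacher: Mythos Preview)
The paper does not prove this statement: it is quoted from \cite[Proposition~4.1]{PV2021} and used as a black box, so there is no in-paper argument to compare your proposal against.

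Your outline follows the classical Brezis--Kato route (mollify in a chart, apply the smooth chain rule $\Delta F_\epsilon(u_\epsilon)\ge F_\epsilon'(u_\epsilon)\,\Delta u_\epsilon$, discard the non-negative measure contribution, and let $\epsilon\to 0$), and the overall structure is sound. You have also correctly isolated the one genuine obstacle: for a variable-coefficient second-order operator the commutator $r_\epsilon=\Delta_g(u_\epsilon)-(\Delta_g u)_\epsilon$ does \emph{not} tend to zero in $L^1_{\loc}$ from $u\in L^1_{\loc}$ alone---Friedrichs' lemma only handles first-order commutators, so one derivative is missing. Your proposed cure, bootstrapping $u$ into $W^{1,q}_{\loc}$ for some $q\in\bigl(1,\tfrac{n}{n-1}\bigr)$ via elliptic regularity for $\Delta u=u+\nu$ with $\nu$ a non-negative Radon measure, does close the gap: once $\partial_j u\in L^q_{\loc}$, Friedrichs' lemma applied to each first-order piece $g^{ij}\partial_i(\partial_j u)$ yields $r_\epsilon\to 0$ in $L^q_{\loc}\subset L^1_{\loc}$, and the remainder of your limit argument goes through verbatim. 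As written, however, your first display is not yet justified; the $W^{1,q}$ step should be promoted from the closing remark to a preliminary before the mollification, not presented as an optional workaround.
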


Since $\mathcal{L}(-u) \ge 0$ we conclude that $\mathcal{L}u_- \ge 0$ in the sense of distributions. If $u_-$ happened to be a $C^2(M)$ function, stochastic completeness would allow us to conclude that $u_- \equiv 0$, hence $u \ge 0$. 
Note that, according to \Cref{rmk:observation on regularity of stoch completeness}, $u_- \in C^0(M) \cap W^{1, 2}_{\loc}(M)$ would be sufficient. 
In general, however, this is not the case and, as a matter of fact, it is a stronger requirement than what we actually need. 
Indeed, if we find $w \in C^2(M)$ such that $\sup_M w < + \infty$, $0 \le u_- \le w$ and $\mathcal{L}w \ge 0$, then stochastic completeness applied to $w$ implies that $w$ hence $u_-$ are identically zero. 

The existence of such function $w$ is implied by the following corollary of \Cref{thm:monotone approximation L intro}, whose proof is postponed to the next section.

\begin{corollary}
\label{cor:CLAIM 2}
Let $(M, g)$ be a Riemannian manifold and let $u \in L^\infty(M)$ be a distributional solution of $\mathcal{L}u \ge 0$. 
Then, for every relatively compact $\Omega \Subset M$ there exists some $u_\Omega \in C^\infty(\Omega)$ which solves $\mathcal{L}u_{\Omega} \ge 0$ in a strong sense and such that $u \le u_\Omega \le 2\esup_\Omega u$.
\end{corollary}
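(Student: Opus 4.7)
The plan is to derive \Cref{cor:CLAIM 2} as a direct, essentially immediate consequence of the monotone approximation \Cref{thm:monotone approximation L intro}. Given $u \in L^\infty(M)$ with $\mathcal{L} u \ge 0$ distributionally and a relatively compact $\Omega \Subset M$, I would apply \Cref{thm:monotone approximation L intro} on $\Omega$ to obtain a sequence $\{u_k\} \subset C^\infty(\Omega)$ enjoying the four listed properties: the pointwise a.e.\ monotone convergence $u_k \searrow u$, the strong (pointwise) inequality $\mathcal{L} u_k \ge 0$, $L^1(\Omega)$-convergence, and the uniform a priori bound $\sup_\Omega u_k \le 2\, \esup_\Omega u$.

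Next, I would exploit the monotone-from-above character of the convergence. For a.e.\ $x \in \Omega$ and every $k$ we have $u_k(x) \ge u(x)$, so every term of the sequence already dominates $u$ almost everywhere on $\Omega$. Setting $u_\Omega := u_1$ (any fixed $u_k$ works equally well), we obtain a function in $C^\infty(\Omega)$ satisfying $\mathcal{L} u_\Omega \ge 0$ in the strong sense, $u \le u_\Omega$ a.e., and, by property (iv) applied to $k=1$, the pointwise bound $\sup_\Omega u_\Omega \le 2\, \esup_\Omega u$. This is exactly what the corollary asserts.

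There is essentially no obstacle in this derivation: the entire content of the corollary, and in particular the crucial explicit constant $2$ in the upper bound, is packaged into properties (i) and (iv) of \Cref{thm:monotone approximation L intro}. The nontrivial work lies upstream, in proving the approximation theorem itself — this is where one must produce smooth approximants that are simultaneously (a) decreasing to $u$, (b) $\mathcal{L}$-superharmonic pointwise, and (c) controlled in $L^\infty$ by a constant multiple of $\esup_\Omega u$. As the introduction indicates, this is handled via the Protter–Weinberger substitution, which reduces the problem to a monotone approximation for distributional solutions of $\Delta_\alpha v \ge 0$, combined with mean value representation formulae for $\Delta_\alpha$-harmonic functions. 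The sharp $L^\infty$ bound with constant $2$ is the delicate point that must be tracked carefully through those constructions, but once \Cref{thm:monotone approximation L intro} is in hand the corollary is a one-line extraction.
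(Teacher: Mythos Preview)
Your proposal is correct and matches the paper's approach: the paper explicitly presents \Cref{cor:CLAIM 2} as a corollary of \Cref{thm:monotone approximation L intro}, and the extraction you describe---fix any term of the monotone approximating sequence and use properties (i), (ii), (iv)---is exactly what is intended. The only minor caveat is that in the paper the bound $\sup_\Omega u_k \le 2\,\esup_\Omega u$ is established for $k$ sufficiently large (via \Cref{lem:rk(x)}), so one should take $u_\Omega := u_{k_0}$ rather than $u_1$, but this is a trivial reindexing.
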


Via a compactness argument we use the functions $u_\Omega$ to construct the function $w$.
The following theorem, proved by Sattinger in \cite{sattinger1972monotone}, also comes into aid as it allows to obtain $\mathcal{L}$-harmonic function from super/sub solutions of $\mathcal{L} u = 0$. 

\begin{thm}
\label{thm:sub super solution}
Let $u_1, u_2 \in C^\infty(M)$ satisfying 
\begin{equation*}
    \mathcal{L} u_1 \ge 0, \quad  \mathcal{L} u_2 \le 0, \quad  u_1 \le u_2
\end{equation*}
on $M$. Then, there exists some $w \in C^\infty(M)$ such that
\begin{equation*}
    u_1 \le w \le u_2 \quad \text{ and } \quad \mathcal{L} w = 0.
\end{equation*}
\end{thm}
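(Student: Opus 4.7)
My plan is to combine the classical Perron-type construction with an exhaustion argument. Since $\mathcal{L} = \Delta - 1$ is linear and $-\mathcal{L} = -\Delta + 1$ is coercive with a positive zero-order term, no nonlinear monotone iteration is needed: Dirichlet problems can be solved directly. First I would fix an exhaustion $\{\Omega_k\}_{k\in\mathbb{N}}$ of $M$ by relatively compact open sets with smooth boundary, $\Omega_k \Subset \Omega_{k+1}$ and $\bigcup_k \Omega_k = M$ (such exhaustions always exist on a smooth Riemannian manifold). On each $\Omega_k$ I would solve the Dirichlet problem
$$(-\Delta + 1) w_k = 0 \text{ in } \Omega_k, \qquad w_k = u_1 \text{ on } \partial \Omega_k,$$
which admits a unique classical solution $w_k \in C^\infty(\overline{\Omega_k})$ by Lax--Milgram for existence together with elliptic regularity up to the smooth boundary (using that $u_1 \in C^\infty$).

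Next I would use the weak maximum principle, which is available in its usual form for $-\Delta+1$ thanks to the positive zero-order coefficient. Since $\mathcal{L}u_1 \geq 0$ means $(-\Delta+1)u_1 \leq 0$, we have $(-\Delta+1)(w_k - u_1) \geq 0$ in $\Omega_k$ with $w_k - u_1 = 0$ on $\partial \Omega_k$, so the maximum principle gives $w_k \geq u_1$. Symmetrically, $(-\Delta+1)(u_2 - w_k) = -\mathcal{L}u_2 \geq 0$ together with the boundary inequality $u_2 - w_k = u_2 - u_1 \geq 0$ on $\partial \Omega_k$ yields $w_k \leq u_2$. Thus $u_1 \leq w_k \leq u_2$ on all of $\Omega_k$.

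Finally I would pass to the limit via local compactness. For any fixed compact $K \subset M$, once $k$ is large enough that $K \Subset \Omega_k$, the sandwich above gives $\|w_k\|_{L^\infty(K)} \leq \max(\|u_1\|_{L^\infty(K)}, \|u_2\|_{L^\infty(K)})$. Interior Schauder estimates for the equation $(-\Delta+1)w_k = 0$, combined with a bootstrap, upgrade this local $L^\infty$ bound to uniform $C^m_{\loc}(M)$ bounds for every $m$. A diagonal Arzelà--Ascoli extraction then produces a subsequence converging in $C^\infty_{\loc}(M)$ to some $w \in C^\infty(M)$ satisfying $\mathcal{L} w = 0$ pointwise, and the sandwich $u_1 \leq w \leq u_2$ passes to the limit.

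I do not foresee any serious obstacle. The coercivity of $-\Delta + 1$ is precisely what makes the Dirichlet problem uniquely solvable on arbitrary relatively compact domains and makes the weak maximum principle available without further geometric assumptions; these are exactly the two features that fail for harmonic functions (where one would need to worry about sign and solvability) and that make the argument here short and transparent. The only mildly technical points, namely the existence of a smooth exhaustion and boundary regularity of the Dirichlet solutions, are entirely standard.
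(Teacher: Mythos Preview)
The paper does not prove this theorem; it quotes it from Sattinger (with the more general version of Ratto--Rigoli--V\'eron noted in the subsequent remark) and uses it as a black box. Your argument is correct and entirely self-contained.

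It is worth pointing out that your exhaustion/Dirichlet/Schauder/compactness scheme is precisely the machinery the paper deploys in the proof of the \emph{next} result, \Cref{thm:CLAIM 1}: there the authors invoke the present theorem on each $\Omega_h$ of an exhaustion and then pass to the limit via Schauder estimates and Arzel\`a--Ascoli. In effect you have absorbed that step and eliminated the external citation. What you give up is generality: the cited references treat genuinely nonlinear equations by the monotone iteration scheme, whereas your direct solution of the linear Dirichlet problem $(-\Delta+1)w_k=0$ relies on the coercivity of $-\mathcal{L}$ and would not extend to that setting. For the purposes of this paper, however, only the linear case is needed, so your shortcut loses nothing.
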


\begin{remark}
\Cref{thm:sub super solution} is a weaker formulation of a much more general theorem, proved by Ratto, Rigoli and Véron, \cite{RRV1994}, for a wider class of functions, namely $u_1,u_2\in C^0(M)\cap W_\textnormal{loc}^{1,2}(M)$. 
This result goes under the name of sub and supersolution method or monotone iteration scheme. Note that the results of \cite{RRV1994} hold for a larger class of second order elliptic operators. 
For a survey on the subject, we refer to Heikkil\"a and Lakshmikantham, \cite{heikkila2017monotone}.
\end{remark}

Using the functions constructed locally in \Cref{cor:CLAIM 2} together with an exhaustion procedure we obtain the following:

\begin{thm}
\label{thm:CLAIM 1}
Let $(M,g)$ be a Riemannian manifold and let $u\in L^\infty(M)$ satisfying $\mathcal{L}u\geq 0$ in the sense of distributions.
Then, there exists $w \in C^\infty(M)$ such that $u\leq w$,  $\mathcal{L}w \geq 0$ in a strong sense and $\sup_M w < + \infty$.
\begin{proof}
We begin by observing that if $u \in L^\infty(M)$ then, setting $c = \Vert u \Vert_{L^\infty(M)}$, we have
\begin{equation*}
    \mathcal{L} c = - c \le 0 \text{ on }M. 
\end{equation*}
Next, take $\lbrace \Omega_h \rbrace$ an exhaustion of $M$ by relatively compact sets such that
\begin{equation*}
    \Omega_1 \Subset \Omega_2 \Subset \ldots \Subset \Omega_h \Subset \Omega_{h+1} \Subset \ldots \Subset M, 
\end{equation*}
$\partial \Omega_h$ is smooth and $M=\cup_h \Omega_h$. 
On each set $\Omega_h$ we apply \Cref{cor:CLAIM 2} and we obtain a sequence of functions $u_h \in C^\infty(\Omega_h)$ such that
\begin{enumerate}
\item $u\leq u_h \leq 2c$ in $\Omega_h$;
\item $\mathcal{L}u_h\geq 0$ strongly on $\Omega_h$.
\end{enumerate}
Since $\mathcal{L}c\leq 0$, we use \Cref{thm:sub super solution} on each $\Omega_h$ to obtain $w_h\in C^\infty(\Omega_h)$ satisfying
\begin{enumerate}
\item $\mathcal{L}w_h=0$;
\item $u_h\leq w_h \leq 2c$.
\end{enumerate}
We conclude by showing that $\{w_h\}_h$ is bounded respect to the $C^\infty(M)$-topology and thus converges, up to a subsequence, to some $w \in C^\infty(M)$.

To this end, let $K \subset M$ be a compact set and $k \in \N$, $k \geq 2$. 
By Schauder estimates for the operator $\mathcal{L}$ we have
\begin{equation*}
    \Vert w_h \Vert_{C^{k}(K)} \leq A \left(\Vert w_h \Vert_{L^\infty(K)} + \Vert \mathcal{L}w_h \Vert_{C^{k-2, \alpha}(K)}  \right)
\end{equation*}
for some $\alpha \in (0, 1)$. 
See for instance Section 6.1 of \cite{GT2001}.  
In particular there exists a constant $C=C(K,n, k)>0$ such that $\Vert w_h \Vert_{C^k(K)}<C$ for every $h\in \N$. 
Here
\begin{align*}
    ||w_h||_{C^k(K)}=||w_h||_{L^\infty(K)}+||\nabla w_h||_{L^\infty(K)} + \cdots + ||\nabla^k w_h||_{L^\infty(K)}.
\end{align*}
Since $\{w_h\}_h$ is pre-compact, it converges in the $C^\infty(M)$ topology up to a subsequence, denoted again with $\{w_h\}_h$. 
Let $w \in C^\infty(M)$ be the $C^\infty$ limit, we have that
\begin{equation*}
    u\leq w\leq 2c \quad \textnormal{and}\quad \mathcal{L}w=0.
\end{equation*}
\end{proof}
\end{thm}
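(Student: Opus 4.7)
My plan is to patch together the local smooth dominators provided by \Cref{cor:CLAIM 2} into a single global smooth function on $M$. A naive patching does not preserve the differential inequality, so instead I would use each local dominator together with a fixed global smooth supersolution to produce an $\mathcal{L}$-harmonic function on each piece of an exhaustion, and then extract a $C^\infty_{\loc}$ limit via elliptic compactness.

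Concretely, I would set $c \coloneqq \|u\|_{L^\infty(M)}$ and observe that the constant function $2c$ is a smooth global supersolution, since $\mathcal{L}(2c) = -2c \leq 0$. Fix an exhaustion $\Omega_1 \Subset \Omega_2 \Subset \cdots \Subset M$ by relatively compact open sets with smooth boundaries and $\bigcup_h \Omega_h = M$. On each $\Omega_h$, \Cref{cor:CLAIM 2} yields $u_h \in C^\infty(\Omega_h)$ with $u \leq u_h \leq 2c$ and $\mathcal{L} u_h \geq 0$ in the strong sense. With $u_h$ as a smooth subsolution and the constant $2c$ as a smooth supersolution that dominates $u_h$, \Cref{thm:sub super solution} then produces $w_h \in C^\infty(\Omega_h)$ with
\begin{equation*}
    \mathcal{L} w_h = 0, \qquad u \leq u_h \leq w_h \leq 2c.
\end{equation*}

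The final step is to pass to a global limit. Since $\mathcal{L} w_h = 0$ on $\Omega_h$ and $\|w_h\|_{L^\infty(\Omega_h)} \leq 2c$, interior Schauder estimates applied on any fixed compact $K \subset M$ give $\|w_h\|_{C^k(K)} \leq C(K,k)$ for every $k \in \N$ and all $h$ large enough that $K \Subset \Omega_h$. A standard diagonal extraction then produces $w \in C^\infty(M)$ as a $C^\infty_{\loc}$-limit of a subsequence, which automatically satisfies $\mathcal{L} w = 0$ (hence $\mathcal{L} w \geq 0$), $u \leq w$ and $\sup_M w \leq 2c < +\infty$, as required.

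I expect the real content of the argument to have been absorbed into \Cref{cor:CLAIM 2}: the fact that the local smooth dominator comes with the \emph{uniform} bound $u_\Omega \leq 2\esup_\Omega u$ is precisely what allows a single constant to serve as upper supersolution on every $\Omega_h$, and it is this uniform $L^\infty$ control that in turn feeds the Schauder estimates and makes the compactness step work. Without such uniformity, one would have no global dominator to bound the sub/super iteration and no uniform sup-norm bound on the $w_h$ to extract a convergent subsequence.
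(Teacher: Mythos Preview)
Your proposal is correct and follows essentially the same route as the paper: apply \Cref{cor:CLAIM 2} on an exhaustion, feed each local dominator and the constant supersolution into \Cref{thm:sub super solution} to obtain $\mathcal{L}$-harmonic $w_h$ uniformly bounded by $2c$, and then use interior Schauder estimates plus diagonal extraction to pass to a global $C^\infty$ limit. Your explicit use of $2c$ (rather than $c$) as the upper barrier is in fact the correct choice, since $u_h$ is only known to satisfy $u_h \le 2c$.
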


This concludes the proof of \Cref{thm: main theorem introduction}, apart from the proof of \Cref{cor:CLAIM 2}. 

\section{Monotone approximation results}
\label{sec:monotone approximation}

This section is devoted to the proof of \Cref{thm:monotone approximation L intro}.
Instead of proving \Cref{thm:monotone approximation L intro} directly, 
we prove an equivalent monotone approximation result for another elliptic differential operator closely related to $\mathcal{L}$. 
We begin by taking a function $\alpha\in C^\infty (M)$ satisfying
\begin{equation}
\label{Eq:AlphaProblem}
    \begin{cases}
    \mathcal{L}\alpha=0\\
    \alpha>0
    \end{cases}.
\end{equation}
The existence of such a function is ensured by \cite{fischer1980structure}, and is equivalent to the fact that $\lambda_1^{-\mathcal{L}} >0$.
In our case it is easy to see that $\lambda^{-\mathcal{L}}_1 \geq 1$.

Using $\alpha$ we define the following drifted Laplacian
\begin{equation}
    \label{eq:Delta alpha}
    \Delta_\alpha:u\mapsto \alpha^{-2} \Div(\alpha^2 \nabla u).
\end{equation}
With a trivial density argument, one has that $\Delta_\alpha$ is symmetric in $L^2$ with respect to the measure $\alpha^2 d\mu_g$.
Then, using the following idea due to Protter and Weinberger, \cite{PW1984}, we establish the relation between $\Delta_\alpha$ and $\mathcal{L}$. 
See also Lemma 2.3 of \cite{PV2021}.

\begin{lem}
\label{Lem:EquivalenceOfPositivity}
If $u\in L^1(\Omega)$ with $\Omega \Subset M$, then
\begin{equation*}
(\Delta-1)u\geq 0 \qquad \Leftrightarrow \qquad \Delta_\alpha\left(\frac{u}{\alpha}\right)\geq 0,
\end{equation*}
where both inequalities are intended in the sense of distributions.

\begin{proof}
Fix $0\leq\varphi\in C^\infty_c(\Omega)$, by direct computation we have
\begin{equation}
\label{Lem:EquivalenceOfPositivity_Formula1}
\begin{split}
\alpha \Delta_\alpha \left( \frac{\varphi}{\alpha} \right) 
& =\alpha^{-1} \Div\left[\alpha^2 \nabla \left(\frac{\varphi}{\alpha}\right)\right]\\
&=\alpha^{-1} \Div\left(\alpha \nabla \varphi - \varphi \nabla \alpha\right)\\
&=\Delta \varphi - \varphi \frac{\Delta \alpha}{\alpha}\\
&=\mathcal{L}\varphi,
\end{split}
\end{equation}
where in the last equation we have used \eqref{Eq:AlphaProblem}.
Thus, using \eqref{Lem:EquivalenceOfPositivity_Formula1} and the symmetry of $\D_\alpha$ we conclude 
\begin{equation*}
\begin{split}
\left(\Delta_\alpha \left(\frac{u}{\alpha}\right), \alpha \varphi \right)_{L^2} & =\int_\Omega{\frac{u}{\alpha}\ \Delta_\alpha \left(\frac{\varphi}{\alpha}\right)\ \alpha^2 d\mu_g}\\
&=\int_\Omega{u\ (\Delta-1)\varphi\ d\mu_g}=\left((\Delta-1)u,\varphi\right)_{L^2}.
\end{split}
\end{equation*}
\end{proof}
\end{lem}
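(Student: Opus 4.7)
The plan is to reduce the equivalence to a pointwise intertwining identity between $\mathcal{L}$ and $\Delta_\alpha$ acting on rescaled test functions, then transfer the distributional inequality across using the fact that $\Delta_\alpha$ is formally self-adjoint with respect to the weighted measure $\alpha^2\, d\mu_g$.

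First I would derive the pointwise identity
\[
\alpha\,\Delta_\alpha(\varphi/\alpha) \;=\; \mathcal{L}\varphi \qquad \text{for all } \varphi \in C^\infty(\Omega).
\]
This is a direct computation: expanding the divergence gives $\Div(\alpha^2\nabla(\varphi/\alpha)) = \Div(\alpha\nabla\varphi - \varphi\nabla\alpha) = \alpha\Delta\varphi - \varphi\Delta\alpha$, and then dividing by $\alpha$ and invoking $\Delta\alpha = \alpha$ from \eqref{Eq:AlphaProblem} yields the claim. This is the only place where the hypothesis $\mathcal{L}\alpha = 0$ enters.

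Next, I would note that $\Delta_\alpha$ is formally symmetric with respect to $\alpha^2\, d\mu_g$, which is immediate from the divergence form of the operator together with integration by parts on compactly supported test functions. Given $\varphi \in C^\infty_c(\Omega)$ with $\varphi \ge 0$, I set $\psi := \varphi/\alpha$, which is still a non-negative element of $C^\infty_c(\Omega)$ since $\alpha$ is smooth and strictly positive. Chaining the symmetry of $\Delta_\alpha$ with the pointwise identity above produces
\[
\int_\Omega \frac{u}{\alpha}\,\Delta_\alpha\psi\; \alpha^2\, d\mu_g \;=\; \int_\Omega u\,\alpha\,\Delta_\alpha(\varphi/\alpha)\, d\mu_g \;=\; \int_\Omega u\,\mathcal{L}\varphi\, d\mu_g.
\]

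Finally, I conclude the equivalence. The left-hand side being non-negative for every non-negative $\psi \in C^\infty_c(\Omega)$ is, by the symmetry of $\Delta_\alpha$ with respect to $\alpha^2\,d\mu_g$, exactly the distributional formulation of $\Delta_\alpha(u/\alpha) \ge 0$; the right-hand side being non-negative for every non-negative $\varphi\in C^\infty_c(\Omega)$ is the distributional formulation of $\mathcal{L}u = (\Delta-1)u \ge 0$. The rescaling $\varphi \mapsto \varphi/\alpha$ is a bijection of $\{\varphi \in C^\infty_c(\Omega) : \varphi\ge 0\}$ onto itself because $\alpha$ is smooth, strictly positive, and bounded away from zero on $\overline{\Omega}$, so the two families of test inequalities coincide. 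No obstacle is really serious here: the only bookkeeping concern is making sure $u/\alpha \in L^1(\Omega)$ so that the distributional pairing is well-defined, but this is immediate from $\Omega \Subset M$ and the positivity of $\alpha$.
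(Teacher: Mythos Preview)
Your proof is correct and follows essentially the same route as the paper: both establish the pointwise intertwining identity $\alpha\,\Delta_\alpha(\varphi/\alpha)=\mathcal{L}\varphi$ via the same divergence computation and the relation $\Delta\alpha=\alpha$, then pass to the distributional equivalence using the formal self-adjointness of $\Delta_\alpha$ with respect to $\alpha^2\,d\mu_g$ and the bijection on non-negative test functions induced by division by $\alpha$. Your explicit remark that $u/\alpha\in L^1(\Omega)$ and that $\varphi\mapsto\varphi/\alpha$ is a bijection of non-negative test functions makes the argument slightly more detailed than the paper's version, but there is no substantive difference.
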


Using \Cref{Lem:EquivalenceOfPositivity_Formula1} and setting $v = \alpha^{-1} u$, it is possible to obtain \Cref{thm:monotone approximation L intro} from an equivalent statement for the operator $\Delta_\alpha$. In this perspective, our goal is to prove the following:

\begin{thm}
\label{Thm:MonotoneApproximationDriftedLaplacian}
Let $(M, g)$ be a Riemannian manifold and let $v \in L^1_{\loc}(M)$ be a solution of $\D_\alpha v \ge 0$ in the sense of distributions. Then, for every $\Omega \Subset M$ there exists a sequence $\lbrace v_k \rbrace \subset C^\infty(\Omega)$ such that:
\begin{enumerate}[label = (\roman*)]
    \item $v_k \searrow v$ pointwise a.e.;
    \item $\D_\alpha v_k \ge 0$ for all $k$;
    \item $v_k \to v$ in $L^1(\Omega)$;
    \item $\sup_\Omega v_k \leq \esup_\Omega v$.
\end{enumerate}
\end{thm}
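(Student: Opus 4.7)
The plan is to construct explicit smooth approximations of $v$ via mean-value regularizations adapted to the weighted operator $\Delta_\alpha$, following the scheme developed by Bonfiglioli and Lanconelli for sub-Laplacians with a global fundamental solution. The guiding principle is that $\Delta_\alpha$ is the Laplace--Beltrami operator of the weighted measure $\alpha^2\, d\mu_g$, so it admits local Dirichlet Green functions with all the structural properties used in the classical theory, and crucially the constants are $\Delta_\alpha$-harmonic.

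Fix $\Omega \Subset M$ with smooth boundary and let $G_\alpha(x,y)$ denote the positive Dirichlet Green function of $-\Delta_\alpha$ on $\Omega$. For $r > 0$ small, introduce the pseudo-balls $\Omega_r(x) = \{ y \in \Omega : G_\alpha(x,y) > 1/r \}$ and the solid mean-value operator
\[
\mathcal{M}_r v(x) = \frac{1}{r} \int_{\Omega_r(x)} v(y)\, |\nabla_y G_\alpha(x,y)|^2\, \alpha^2(y)\, d\mu_g(y),
\]
normalised via Green's second identity (using $\Delta_\alpha 1 \equiv 0$) so that $\mathcal{M}_r 1 \equiv 1$. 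The candidate sequence will be $v_k \coloneqq \mathcal{M}_{r_k} v$ for a fixed $r_k \searrow 0$. The key analytic facts I would establish are: (a) $\mathcal{M}_r v \in C^\infty(\Omega)$, from the smooth dependence of $G_\alpha(x,\cdot)$ on the pole; (b) if $\Delta_\alpha v \ge 0$ distributionally, then $r \mapsto \mathcal{M}_r v(x)$ is non-decreasing and $\mathcal{M}_r v \ge v$ a.e.; (c) $\mathcal{M}_r v(x) \to v(x)$ a.e.\ as $r \to 0^+$ by Lebesgue differentiation, which together with monotone convergence upgrades to convergence in $L^1(\Omega)$; (d) $\Delta_\alpha \mathcal{M}_r v \ge 0$ strongly, as mean-value operators preserve sub-harmonicity. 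Properties (i)--(iii) of the theorem follow immediately.

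For property (iv), non-negativity of the kernel and the normalisation $\mathcal{M}_r 1 = 1$ yield $\mathcal{M}_r v(x) \le \esup_{\Omega_r(x)} v$. Since I would have built $G_\alpha$ as the Dirichlet Green function of $\Omega$ \emph{itself}, the pseudo-ball $\Omega_r(x)$ is automatically contained in $\Omega$, and the desired bound $\sup_\Omega v_k \le \esup_\Omega v$ drops out. The mild price to pay is that $\Omega_r(x)$ degenerates as $x \to \partial \Omega$; however, since all four conclusions are pointwise a.e., $L^1$, or interior-smoothness statements, this degeneration is harmless.

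The main obstacle I foresee is establishing the monotonicity (b) when $v$ is merely a distributional solution. For $v \in C^2$ it is a direct computation using Green's second identity together with the co-area formula applied to the level sets of $G_\alpha$. In the distributional case my plan is to exploit a Riesz-type decomposition on $\Omega$: since $\mu \coloneqq \Delta_\alpha v$ is a non-negative Radon measure, write $v = h - \int_\Omega G_\alpha(\cdot,y)\, d\mu(y)$ with $h$ an $\alpha$-harmonic function on $\Omega$. The $\alpha$-harmonic summand satisfies the mean-value equality, while for the Green potential the sub-mean-value inequality follows from a Fubini computation against the kernel $|\nabla_y G_\alpha(x,y)|^2 \alpha^2(y)$. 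As a backup, one may instead mollify $v$ in coordinate charts, apply the smooth mean-value identity to the approximants, and pass to the limit using the positivity of $\mu$.
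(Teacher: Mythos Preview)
Your overall architecture matches the paper's: Green-function pseudo-balls $\Omega_r(x)=\{G_\alpha(x,\cdot)>1/r\}$, mean-value operators, the Riesz decomposition $v=h-G_\alpha\ast d\nu^v$ to handle the merely distributional case (the paper invokes Sj\"ogren's theorem for exactly this step), and the bound $\sup_\Omega v_k\le\esup_\Omega v$ via the normalisation on constants. However, the specific solid mean you wrote down is ill-defined. Near the pole, $G_\alpha(x,y)$ has the same leading singularity as the Euclidean Green function, so $|\nabla_y G_\alpha(x,y)|^2$ blows up like $d(x,y)^{2-2n}$, which is not locally integrable in any dimension $n\ge 2$. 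Hence
\[
\mathcal{M}_r 1(x)=\frac{1}{r}\int_{\Omega_r(x)}|\nabla_y G_\alpha(x,y)|^2\,\alpha^2(y)\,d\mu_g(y)=+\infty,
\]
the claimed normalisation ``$\mathcal{M}_r 1\equiv 1$'' fails, and with it your route to (iv). (A quick co-area check: $\int_{\Omega_r(x)}|\nabla G_\alpha|^2\alpha^2\,d\mu_g=\int_{1/r}^{\infty}\big(\int_{\{G_\alpha=t\}}|\nabla G_\alpha|\alpha^2\,d\sigma\big)\,dt=\int_{1/r}^{\infty}1\,dt$, using the flux identity for the surface integral.)

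The paper's remedy, following Bonfiglioli--Lanconelli, is to work instead with the \emph{surface} mean
\[
m_r(v)(x)=\int_{\partial\Omega_r(x)} v(y)\,|\nabla_y G_\alpha(x,y)|\,\alpha^2(y)\,d\sigma_y,
\]
which is finite and genuinely satisfies $m_r(1)\equiv 1$ (this is the flux identity coming from $\Delta_\alpha G_\alpha(x,\cdot)=-\delta_x$). The surface mean already enjoys the sub-mean-value inequality, monotonicity in $r$, pointwise convergence to $v$ as $r\to 0$, and preservation of $\alpha$-subharmonicity, but there is no reason for $x\mapsto m_r(v)(x)$ to be smooth: both the hypersurface $\partial\Omega_r(x)$ and the kernel singularity move with the pole. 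Smoothness is manufactured by an additional convolution \emph{in the radius}: one sets
\[
v_k(x)=k\int_0^{\infty}\varphi(ks)\,m_s(v)(x)\,ds
\]
with $\varphi\in C^1_c([0,1])$ of unit mass. This extra step is precisely what is missing from your outline; your assertion (a) that a single mean is automatically $C^\infty$ ``from the smooth dependence of $G_\alpha(x,\cdot)$ on the pole'' would not be justified even after repairing the kernel. Once you insert the $r$-convolution, your remaining plan (including the Riesz-decomposition argument for monotonicity in the distributional case) lines up with the paper's proof.
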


% This last section is devoted to the proof of the monotone approximation result for the distributional solutions of $\Delta_\alpha v \ge 0$. 
% In the following, the set $\Omega$ is fixed. 

\subsection{Representation formula for \texorpdfstring{$\alpha$}{alpha}-harmonic functions}
We begin by establishing some mean value representation formulae involving the Green function of the operator $\D_\alpha$ on $\Omega$ with Dirichlet boundary conditions.
Recall that $G : \overline{\Omega} \times \overline{\Omega} \setminus \{x = y\} \to \R$ is a symmetric, $L^1(\Omega\times \Omega)$ function satisfying the following properties:

\begin{enumerate}[label =(\alph*)]
    \item $G \in C^\infty\left(\Omega \times \Omega \setminus \{x = y\}\right)$ and $G(x, y) > 0$ for all $x, y \in \Omega$ with $x \neq y$;
    \item $\lim_{x \to y} G(x, y) = + \infty$ and $G(x, y) = 0$ if $x \in \partial \Omega$ (or $y \in \partial \Omega$);
    \item $\D_\alpha G (x, y) = - \delta_x(y)$ with respect to $\alpha^2 d\mu_g$, that is, 
    \begin{equation*}
        \varphi(x) = -\int_\Omega G(x, y) \D_\alpha \varphi(y) \alpha^2(y) d\mu_y \qquad \forall \varphi \in C^\infty_C(\Omega)
    \end{equation*}.
\end{enumerate}

For $r > 0$ and $x \in \Omega$, we define the following set
\begin{equation}
    \label{eq:balls green}
    \mathcal{B}_r(x)\coloneqq\left\lbrace y\in \Omega\ |\ G(x,y)>r^{-1} \right \rbrace \cup \{ x\}. 
\end{equation}
We adopt the convention $G(x, x) = +\infty$ so that  $\mathcal{B}_r(x) = \left\lbrace y\in \Omega\ |\ G(x,y)>r^{-1} \right \rbrace$. 
Observe that $\mathcal{B}_r(x) \subset \Omega$ are open and relatively compact sets, moreover, for almost all $r>0$, $\partial \mathcal{B}_r(x)$ is a smooth hypersurface. This is a consequence of Sard's theorem. 
In the following, $d\sigma$ and $d\mu$ represent the Riemannian surface and volume measure of $\partial\mathcal{B}_r(x)$ and $\mathcal{B}_r(x)$ respectively. 

\begin{prop}
\label{Prop:RepresentationFormula}
For every $v\in C^\infty(\Omega)$ and almost every $r>0$, the following representation formula holds
\begin{equation}
\label{Eq:RepresentationFormula}
\begin{split}
v(x)=\int_{\partial \mathcal{B}_r(x)} v(y) |\nabla G (x,y)| \alpha^2(y) d\sigma_y 
- \int_{\mathcal{B}_r(x)} \left[G(x,y)-\frac{1}{r}\right] \Delta_\alpha v(y) \alpha^2(y) d\mu_y
\end{split}
\end{equation}
\end{prop}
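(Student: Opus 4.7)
The plan is to apply Green's second identity for the operator $\Delta_\alpha$ on the annular region $U_\varepsilon := \mathcal{B}_r(x) \setminus \overline{\mathcal{B}_\varepsilon(x)}$, taking $f = v$ and $h(y) = G(x,y) - 1/r$, and then sending $\varepsilon \searrow 0$. Since $\Delta_\alpha$ is symmetric with respect to the weighted measure $\alpha^2 d\mu_g$, one has
\begin{equation*}
    \int_{U_\varepsilon} \left( v\, \Delta_\alpha h - h\, \Delta_\alpha v \right) \alpha^2 d\mu_g = \int_{\partial U_\varepsilon} \alpha^2 \left( v\, \partial_\nu h - h\, \partial_\nu v \right) d\sigma,
\end{equation*}
where $\nu$ is the outer unit normal to $U_\varepsilon$. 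By Sard's theorem, for almost every $r>0$ the value $1/r$ is a regular value of $G(x,\cdot)$, so $\partial\mathcal{B}_r(x)$ is a smooth hypersurface, and the same holds for $\partial\mathcal{B}_\varepsilon(x)$ along a sequence $\varepsilon_k \searrow 0$, which is enough to apply the divergence theorem.

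The first task is to identify the boundary terms. On $\partial\mathcal{B}_r(x)$, $h$ vanishes identically and the outer normal of $U_\varepsilon$ points in the direction of decrease of $G$, giving $\partial_\nu h = -|\nabla G|$; on $\partial\mathcal{B}_\varepsilon(x)$ the outer normal of $U_\varepsilon$ points toward $x$, hence $\partial_\nu h = +|\nabla G|$. Using $\Delta_\alpha h \equiv 0$ on $\Omega\setminus\{x\}$, the identity reduces to
\begin{equation*}
    -\int_{U_\varepsilon} \left[G - \tfrac{1}{r}\right] \Delta_\alpha v\, \alpha^2 d\mu_g = -\int_{\partial\mathcal{B}_r(x)} v |\nabla G|\alpha^2 d\sigma + \int_{\partial\mathcal{B}_\varepsilon(x)} v |\nabla G|\alpha^2 d\sigma - \int_{\partial\mathcal{B}_\varepsilon(x)} \left[G - \tfrac{1}{r}\right] \partial_\nu v\, \alpha^2 d\sigma.
\end{equation*}

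The second task is the limit $\varepsilon\to 0$. The crucial input is the normalization
\begin{equation*}
    \int_{\partial\mathcal{B}_\varepsilon(x)} |\nabla G|\, \alpha^2 d\sigma = 1 \quad \text{for all sufficiently small } \varepsilon>0.
\end{equation*}
Its independence of $\varepsilon$ is obtained by applying the divergence theorem to $\alpha^2\nabla G$ on an annulus bounded by two level sets of $G(x,\cdot)$, using $\Div(\alpha^2\nabla G) = \alpha^2\Delta_\alpha G \equiv 0$ off $x$; the value $1$ is then pinned down by testing the distributional identity $-\Delta_\alpha G(x,\cdot) = \delta_x/\alpha^2$ against a cutoff equal to $1$ in a neighbourhood of $x$ and combining with Green's identity again. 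By continuity of $v\alpha^2$ near $x$, this yields $\int_{\partial\mathcal{B}_\varepsilon(x)} v|\nabla G|\alpha^2 d\sigma \to v(x)$. The third boundary term vanishes because $|G-1/r|\leq C/\varepsilon$ on $\partial\mathcal{B}_\varepsilon(x)$, $\partial_\nu v$ is bounded, and the induced surface measure of $\partial\mathcal{B}_\varepsilon(x)$ decays like $\varepsilon^{1/(n-2)}$ for $n\geq 3$ (resp.\ like $e^{-1/\varepsilon}$ for $n=2$), as can be read off the local asymptotics of the fundamental solution. For the interior integral, $G(x,\cdot)-1/r$ is in $L^1(\mathcal{B}_r(x))$ and $\Delta_\alpha v$ is smooth up to the boundary, so dominated convergence gives $\int_{U_\varepsilon}\to\int_{\mathcal{B}_r(x)}$. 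Rearranging the resulting identity yields \eqref{Eq:RepresentationFormula}.

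The main obstacle is the fine behaviour of $G(x,\cdot)$ at the pole, namely the normalization $\int|\nabla G|\alpha^2 d\sigma = 1$ on small level sets and the decay rate of the Riemannian area of $\partial\mathcal{B}_\varepsilon(x)$. Since $G$ is defined only through its distributional characterization, both quantitative statements must be recovered either by comparison with a Euclidean parametrix in a normal chart or by invoking standard on/off-diagonal asymptotics for Green functions of second order uniformly elliptic operators; once these are in hand, the remainder is routine Green's identity bookkeeping.
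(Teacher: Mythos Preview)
Your argument is correct and is at bottom the same Green's-identity computation as the paper's, but the execution differs in level of detail and in one organizational choice. The paper simply invokes ``the Green identity'' on $\mathcal{B}_r(x)$ to write
\[
v(x)=-\int_{\mathcal{B}_r(x)} G\,\Delta_\alpha v\,\alpha^2 d\mu+\int_{\partial\mathcal{B}_r(x)}\bigl(G\,\partial_\nu v - v\,\partial_\nu G\bigr)\alpha^2 d\sigma
\]
as a known representation formula, then substitutes $G=1/r$, $\partial_\nu G=-|\nabla G|$ on the boundary and converts $\tfrac{1}{r}\int_{\partial\mathcal{B}_r}\partial_\nu v\,\alpha^2 d\sigma$ into $\tfrac{1}{r}\int_{\mathcal{B}_r}\Delta_\alpha v\,\alpha^2 d\mu$ by a further divergence theorem. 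You instead re-derive that representation formula from scratch via the excision $U_\varepsilon=\mathcal{B}_r\setminus\overline{\mathcal{B}_\varepsilon}$ and the limit $\varepsilon\to 0$; choosing $h=G-1/r$ rather than $h=G$ lets you land directly on \eqref{Eq:RepresentationFormula} without the paper's extra divergence-theorem step, at the price of an additional inner-boundary term to control. What the paper's route buys is brevity (two lines, no pole asymptotics); what yours buys is self-containment, since all the analysis near the singularity---the normalization $\int_{\partial\mathcal{B}_\varepsilon}|\nabla G|\alpha^2 d\sigma=1$ and the decay of the remaining boundary term---is made explicit rather than hidden inside the cited identity. One harmless slip: the surface measure of $\partial\mathcal{B}_\varepsilon(x)$ scales like $\varepsilon^{(n-1)/(n-2)}$ for $n\ge 3$, not $\varepsilon^{1/(n-2)}$; the latter is the decay rate of the full product $(1/\varepsilon)\cdot|\partial\mathcal{B}_\varepsilon|$, so your conclusion stands.
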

\begin{proof}
By the Green identity we have 
\begin{align*}
    v(x) = &-\int_{\mathcal{B}_r(x)} G(x,y) \D_\alpha v(y) \alpha^2(y) d\mu_y \\
    &+ \int_{\partial \mathcal{B}_r(x)} \Big(G (x,y) \frac{\partial v}{\partial \nu}(y) -v(y) \frac{\partial G}{\partial \nu}(x,y)\Big)\alpha^2(y) d\sigma_y.
\end{align*}
Since $\frac{\partial G}{\partial \nu}=-|\nabla G|$, we obtain
\begin{align*}
    v(x)=&\int_{\partial \mathcal{B}_r(x)} v(y) \Big|\nabla G (x,y)\Big| \alpha^2(y) d\sigma_y+\frac{1}{r}\int_{\partial \mathcal{B}_r(x)} \frac{\partial v}{\partial \nu}(y) \alpha^2(y) d\sigma_y \\
    &- \int_{\mathcal{B}_r(x)} G(x,y) \Delta_\alpha v(y) \alpha^2(y) d\mu_y\\
    =&\int_{\partial \mathcal{B}_r(x)} v(y) \Big|\nabla G (x,y)\Big| \alpha^2(y) d\sigma_y- \int_{\mathcal{B}_r(x)} \Big[G(x,y)-\frac{1}{r}\Big] \Delta_\alpha v(y) \alpha^2(y) d\mu_y.
\end{align*}
\end{proof}

In particular, if $v\in C^2(\Omega)$ is $\alpha$-harmonic, i.e. $\Delta_\alpha u=0$ on $\Omega$, then
\begin{equation}
\label{Eq:RepresentationFormulaAlphaHarmonic}
v(x)=\int_{\partial \mathcal{B}_r(x)}|\nabla G(x,y)|\ v(y)\ \alpha^2(y)\ d\sigma_y.
\end{equation}
The formulae \eqref{Eq:RepresentationFormulaAlphaHarmonic} and \eqref{Eq:RepresentationFormula} are a generalization of some standard representation formula for the Laplace-Beltrami operator. 
See for instance the Appendix of \cite{bonfiglioli2013subharmonic}, \cite{N2007} or the very recent \cite{cupini2021mean}.

\subsection{Distributional vs. potential \texorpdfstring{$\alpha$}{alpha}-subharmonic solutions}

Before proving the monotone approximation result, we observe that the notion of $\alpha$-subharmonicity in the distributional sense is closely related to the notion of $\alpha$-subharmonic solutions in the sense of potential theory. 

\begin{defin} 
\label{def:alpha subharmonic potential}
We say that an upper semicontinuous function $u:\Omega\to [-\infty,+\infty)$ is \textit{$\alpha$-subharmonic in the sense of potential theory} on $\Omega$ if the following conditions hold
\begin{enumerate}[label = (\roman*)]
\item $\{x\in \Omega\ |\ u(x)>-\infty\}\neq \emptyset$;
\item for all $V\Subset \Omega$ and for every $h\in C^2(V)\cap C^0(\overline{V})$ such that $\D_\alpha h = 0 $ in $V$ with $u \le h$ on $\partial V$, then
\begin{equation*}
u\leq h \qquad \text{ in }  V. 
\end{equation*}
\end{enumerate}
\end{defin}
The key observation, first noted by Sj\"orgen in \cite[Theorem 1]{sjogren1973adjoint} in the Euclidean setting, is that every distributional $\alpha$-subharmonic function is almost everywhere equal to a function which is $\alpha$-subharmonic in the sense of potential theory.
Note that in \cite[Theorem 1]{sjogren1973adjoint}, Sj\"orgen considers a wider class of elliptic differential operators. The drifted Laplace-Beltrami operator falls into that class.

More precisely, if $v\in L^1(\Omega)$ satisfies $\D_\alpha v \geq 0$ in the sense of distributions, then $v$ is equal almost everywhere to an $\alpha$-subharmonic function in the sense of potential theory. 
Naturally, if $v$ has some better regularity property, for example it is continuous, the equality holds everywhere. 
This fact holds true also in the Riemannian case, we sketch here the proof for clarity of exposition.

Recall that for every $\varphi \in C^\infty_c(\Omega)$ we have
\begin{equation*}
    \varphi(x)= - \int_\Omega G(x,y)\Delta_\alpha \varphi (y)\ \alpha^2(y)d\mu_y.
\end{equation*}
Furthermore, since $\D_\alpha v = d\nu^v$ is a positive Radon measure, we have
\begin{equation*}
    \int_\Omega v(x)\Delta_\alpha \varphi(x)\ \alpha^2(x)d\mu_x=\int_\Omega \varphi(x)\ d\nu^v_x
\end{equation*}
for every $\varphi \in C^\infty_c(\Omega)$.
The measure $d\nu^v$ is often referred as the $\D_\alpha$-Riesz measure of $v$.
By a direct computation we have 
\begin{align*}
    \int_\Omega v(x)\Delta_\alpha \varphi(x)\ \alpha^2(x)d\mu_x &=\int_\Omega \varphi(x)\ d\nu^v_x\\
    &= - \int_\Omega \int_\Omega G(x,y)\Delta_\alpha \varphi(y)\ \alpha^2(y) d\mu_y\ d\nu^v_x\\
    &=\int_\Omega - \left(\int_\Omega G(x,y) d\nu^v_x \right) \Delta_\alpha \varphi(y) \alpha^2(y)d\mu_y, 
\end{align*}
hence, 
\begin{equation*}
    \int_\Omega \left(v(y)+\int_\Omega G(x,y)\ d\nu^v_x \right) \Delta_\alpha \varphi (y) \alpha^2(y) d\mu_y=0,
\end{equation*}
for every $0\leq \varphi\in C^\infty_c(\Omega)$.
In other words, the function 
\begin{equation*}
    v + \int_\Omega G(x, \cdot) d\nu^v_x
\end{equation*}
is $\alpha$-harmonic in the sense of distributions. 
By \cite[Theorem 1]{sjogren1973adjoint} of Sj\"orgen we know that $\alpha$-harmonic functions are almost everywhere equal to a function which is $\alpha$-harmonic in the sense of potential theory. 
When the operator at hand is the Euclidean Laplacian, this result is usually referred as Weyl's lemma. 
We conclude that 
\begin{equation}
    \label{eq:representation for alpha subharmonic sjogren}
    v\overset{a.e.}{=}h-\int_\Omega G(x,\cdot) d \nu_x^v,
\end{equation}
where $h$ is $\alpha$-harmonic in a strong sense.
On the other hand, one can prove that the function 
\begin{equation}
    - G \ast d\nu^v = -\int_\Omega G(x,\cdot) d \nu_x^v
\end{equation}
is $\alpha$-subharmonic in the sense of potential theory which concludes the sketch of the proof. 
For this latter statement, we refer to Section 6 of \cite{bonfiglioli2013subharmonic}.

\subsection{Proof of Theorem \ref{Thm:MonotoneApproximationDriftedLaplacian}} 
In order to prove \Cref{Thm:MonotoneApproximationDriftedLaplacian}, we
adopt a strategy laid out by Bonfiglioli and Lanconelli in \cite{bonfiglioli2013subharmonic}, where they obtained some monotone approximation results for a wide class of second order elliptic operators on $\R^n$.
To do so, we begin by defining the following mean integral operators. 
If $v$ is an upper semicontinuous function on $\Omega$, $x \in \Omega$ and $r > 0$, we set 
\begin{equation}
    m_r(v)(x) \coloneqq \int_{\partial \mathcal{B}_r(x)}{v(y) | \nabla_y G(x,y)|  \alpha^2(y)\ d\sigma_y}. 
\end{equation}

In particular, if $v$ is an $\alpha$-subharmonic function in the sense of distributions we have the following results, which are an adaptation to the case or Riemannian manifolds of \cite{bonfiglioli2013subharmonic}. 
\begin{prop}
\label{prop:MrProperties}
Given a Riemannian manifold $(M,g)$ and $\Omega\Subset M$, if $v\in L^1(\Omega)$ is $\alpha$-subharmonic in the sense of distributions, then
\begin{enumerate}[label = (\roman*)]
    \item $v(x)\leq m_r(v)(x)$ for almost every $x \in \Omega$ and almost every $r>0$;
    \item let $0<s<r$ then $m_s(v)(x)\leq m_r(v)(x)$ almost everywhere in $\Omega$;
    \item for almost every $x\in \Omega$ we have $\lim_{r\to 0} m_r(v)(x)=v(x)$;
    \item for every $r>0$ $m_r(v)$ is $\alpha$-subharmonic in the sense of potential on $\Omega$. 
\end{enumerate}
\begin{proof} 
By the observation in the previous section, up to a choice of a good representative, we can assume that $v$ is $\alpha$-subharmonic in the sense of potential, cf. \Cref{def:alpha subharmonic potential}.
\begin{enumerate}[label = (\roman*), wide, labelindent=0pt]
    \item Fix $x_0\in \Omega$ and $r>0$, consider $\varphi\in C^0(\partial \mathcal{B}_r(x_0))$ such that $v\leq \varphi$ on $\partial \Br(x_0)$. 
    Let $h:\Br(x_0)\to \R$ be the solution of
    \begin{equation}
    \label{Eq:HHarmonicSolution}
    \begin{cases}
        \D_\alpha h=0 & \textnormal{in}\ \Br(x_0)  \\
             h=\varphi & \textnormal{on}\ \partial \Br(x_0) 
    \end{cases}.
    \end{equation}
    Since $v$ is $\alpha$-subharmonic in the sense of potential, then $v\leq h$ in $\Br(x_0)$.
    By Proposition \ref{Prop:RepresentationFormula} we have 
    \begin{equation}
    \label{Eq:DiseqHHarmonic}
        v(x_0)\leq h(x_0)=\int_{\pBr(x_0)}\varphi(y) |\nabla_y G(x_0,y)| d\sigma_y^\alpha
    \end{equation}
    where $d\sigma_y^\alpha=\alpha^2(y)\ d\sigma_y$.
    Since $v$ is upper semicontinuous on $\pBr(x_0)$, there exists a sequence $\{\varphi_i\}_i \subset C^0(\pBr(x_0))$ such that $\varphi_i(y)\searrow v(y)$ almost everywhere on $\pBr(x_0)$.
    Applying \eqref{Eq:DiseqHHarmonic} to each $\varphi_i$ we obtain by Dominated Convergence that
    \begin{equation*}
        v(x_0)\leq \int_{\pBr(x_0)}{v(y) | \nabla_y G(x_0,y)| d\sigma_y^\alpha}=m_r(v)(x_0).
    \end{equation*}
    
    \item Fix $0<s<r$, let $\varphi$ and $h$ be as in $(i)$ so that $v \le h$ on $\Br(x_0)$.
    By \Cref{Prop:RepresentationFormula} we have 
    \begin{equation*}
        m_s(v)(x_0) \leq \int_{\partial \mathcal{B}_s(x_0)}{h(y)|\nabla_y G(x_0,y)| d\sigma_y^\alpha}=h(x_0) = \int_{\pBr(x_0)}{\varphi(y) |\nabla_y G(x_0,y)| d\sigma_y^\alpha}.
    \end{equation*}
    Taking a monotone sequence of continuous functions on the boundary $\varphi_i\searrow u$ and proceeding as above we conclude
    \begin{align*}
        m_s(v)(x_0)\leq \int_{\pBr(x_0)}{\varphi_i(y) |\nabla_y G(x_0,y)| d\sigma_y^\alpha}\longrightarrow m_r(v)(x_0).
    \end{align*}
    
    \item This property is a consequence of the fact that $v$ is (almost everywhere) equal to an upper semicontinuous function. 
    Fix $x_0\in \Omega$ and $\varepsilon>0$ there exists a small enough neighborhood of $x_0$, $V(x_0)$, such that .
    \begin{equation*}
        v(y)<v(x_0)+\varepsilon
    \end{equation*}
    on $V(x_0)$. 
    Taking for $r>0$ small enough, we have
    \begin{equation*}
        m_r(v)(x_0) \leq v(x_0)+\varepsilon.
    \end{equation*}
    Recall that the function constant to $1$ is $\alpha$-harmonic on $\Omega$.
    By $(i)$, $v(x_0)\leq m_r(v)(x_0)$ hence
    \begin{equation*}
        m_r(v)(x_0)-\varepsilon\leq v(x_0) \leq m_r(v)(x_0).
    \end{equation*}
    Letting $\varepsilon$, and thus $r$ go to $0$, we obtain desired property.
    \item This last property is a consequence of the decomposition of $\alpha$-subharmonic functions observed in \eqref{eq:representation for alpha subharmonic sjogren}. 
    Integrating against $|\nabla G| \alpha^2$ both sides of \eqref{eq:representation for alpha subharmonic sjogren} we obtain 
    \begin{equation*}
        m_r(v)(x) = h(x) - m_r(G \ast d\nu^v)(x).
    \end{equation*}
    The desired property follows from the fact that the mean integral $- m_r(G \ast d\nu^v)$ is $\alpha$-subharmonic in the sense of potential. 
    For details we refer to Section 6 of \cite{bonfiglioli2013subharmonic}.
\end{enumerate}
\end{proof}
\end{prop}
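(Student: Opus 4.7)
The plan is to exploit the Riesz-type decomposition recalled just above: every distributional $\alpha$-subharmonic $v\in L^1(\Omega)$ agrees almost everywhere with an upper semicontinuous function that is $\alpha$-subharmonic in the sense of potential and admits the representation $v = h - G\ast d\nu^v$ with $h$ strongly $\alpha$-harmonic. I would fix this representative from the outset. A preliminary identity I would lean on repeatedly is
\[
\int_{\partial\mathcal{B}_r(x)}|\nabla_y G(x,y)|\alpha^2(y)\,d\sigma_y = 1,
\]
obtained by plugging the constant function $1$ into \Cref{Prop:RepresentationFormula}; in particular $|\nabla G|\alpha^2\,d\sigma$ is a probability measure on $\partial\mathcal{B}_r(x)$. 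Sard's theorem guarantees that $\partial\mathcal{B}_r(x)$ is a smooth hypersurface for a.e.\ $r$, which is what allows both the Dirichlet problem and the representation formulae to be invoked classically.

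For (i) and (ii) I would fix $x_0$ and an admissible $r$, use upper semicontinuity to approximate $v|_{\partial\mathcal{B}_r(x_0)}$ from above by a decreasing sequence $\varphi_k \in C^0(\partial\mathcal{B}_r(x_0))$, and solve $\D_\alpha h_k = 0$ in $\mathcal{B}_r(x_0)$ with boundary datum $\varphi_k$. The potential-theoretic definition of $\alpha$-subharmonicity forces $v\le h_k$ on $\mathcal{B}_r(x_0)$, so evaluating \Cref{Prop:RepresentationFormula} at $x_0$ for $h_k$ gives $v(x_0) \le h_k(x_0) = \int_{\partial\mathcal{B}_r(x_0)} \varphi_k |\nabla G|\alpha^2\,d\sigma$, and dominated convergence as $\varphi_k \searrow v$ closes (i). For (ii), the same $h_k$ dominates $v$ on every smaller set $\mathcal{B}_s(x_0)$ with $s<r$; integrating against $|\nabla G(x_0,\cdot)|\alpha^2\,d\sigma$ on $\partial\mathcal{B}_s(x_0)$ and applying the mean value identity \eqref{Eq:RepresentationFormulaAlphaHarmonic} to the $\alpha$-harmonic $h_k$ reduces $m_s(h_k)(x_0)$ to $h_k(x_0) = m_r(\varphi_k)(x_0)$, and letting $k\to\infty$ then yields $m_s(v)(x_0)\le m_r(v)(x_0)$.

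Property (iii) follows by combining (i) with upper semicontinuity: for $\varepsilon>0$ one has $v(y) < v(x_0) + \varepsilon$ on a neighborhood of $x_0$, and for $r$ small the probability measure $|\nabla G|\alpha^2\,d\sigma$ converts this pointwise bound into $m_r(v)(x_0) \le v(x_0) + \varepsilon$. For (iv) I would apply $m_r$ termwise to $v = h - G\ast d\nu^v$: the harmonic part $h$ is reproduced by the mean value identity, while $-m_r(G\ast d\nu^v)$ remains $\alpha$-subharmonic in the sense of potential by the classical fact that the mean integral of a superharmonic potential is itself potential-theoretically subharmonic, a result I would import from Section~6 of \cite{bonfiglioli2013subharmonic}. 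I expect the main obstacle to be regularity: the sublevel sets $\mathcal{B}_r(x_0)$ of $G$ are smoothly bounded only for a.e.\ $r$, and even there one must verify the classical solvability of the $\alpha$-Dirichlet problem and the applicability of \Cref{Prop:RepresentationFormula} up to the boundary — which is precisely what forces the \emph{almost every} $r$ clauses throughout the statement.
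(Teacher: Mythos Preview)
Your proposal is correct and follows essentially the same route as the paper: choose the upper semicontinuous potential-theoretic representative, approximate $v$ on $\partial\mathcal{B}_r(x_0)$ from above by continuous data, solve the corresponding $\alpha$-Dirichlet problems to obtain harmonic majorants, and read off (i)--(iii) via the representation formula and the fact that $|\nabla G|\alpha^2\,d\sigma$ has unit mass, while (iv) is handled via the Riesz decomposition $v = h - G\ast d\nu^v$ and a citation to Section~6 of \cite{bonfiglioli2013subharmonic}. The only cosmetic differences are that you make the probability-measure identity and the Sard argument explicit, whereas the paper leaves these implicit.
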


The next step is to take a convolution of the mean integral functions $m_r(v)$ so to obtain smooth functions which produce the desired approximating sequence $\{v_k\}_k$.

\begin{proof}[Proof of Theorem \ref{Thm:MonotoneApproximationDriftedLaplacian}]
Let $\varphi\in C^1_c([0,1])$ be a non-negative function with unitary $L^1$-norm, we define
\begin{equation}
    \label{Def:Uk}
    v_k(x) \coloneqq k \int_0^{+\infty}{\varphi(ks)\ m_s(v)(x) ds}
\end{equation}
As shown in \cite{bonfiglioli2013subharmonic} the functions defined by \eqref{Def:Uk} are eventually smooth.

The monotonicity of the approximating sequence follows immediately from the monotonicity of $m_r(v)$ with respect to $r$.
Combining this with property $(i)$ of \Cref{prop:MrProperties} we obtain $(i)$. 
The proof of $(ii)$ is a consequence of $(iii)$ in \Cref{prop:MrProperties}. 
Both this proofs are straightforward computations, we refer to \cite[Theorem 7.1]{bonfiglioli2013subharmonic} for the details. 
The convergence in $L^1(\Omega)$ follows from $(i)$ and $(ii)$, using the fact that $|v_k| \leq \max\{|v|, |v_1|\}$ and Dominated Convergence. 
For the uniform estimate of $(iv)$, it is enough to observe that $1$ is an $\alpha$-harmonic function on $\Omega$ and $\varphi$ has unitary $L^1$ norm, hence, 
\begin{equation*}
    v_k(x) = k\int_0^{+\infty}{\varphi(ks)\ m_s(v)(x)\ ds} \leq \esup_{\Omega} v k \int_0^{+\infty}{\varphi(ks)\ m_s(1)(x)\ ds} = \esup_{\Omega}v.
\end{equation*}
This concludes the proof of \Cref{Thm:MonotoneApproximationDriftedLaplacian}. 
\end{proof}

\begin{remark}
\label{rmk:observation on ess sup}
Note that in the last estimate, one actually has
\begin{equation*}
    \esup_\Omega v_k \leq \underset{\mathcal{B}_{1/k}(x)}{\esup} v \le \esup_\Omega v. 
\end{equation*}
This observation will be crucial later on. 
\end{remark}

\subsection{Proof of \texorpdfstring{\Cref{thm:monotone approximation L intro}}{Theorem D}}
Finally, we desume the proof of \Cref{thm:monotone approximation L intro} from \Cref{Thm:MonotoneApproximationDriftedLaplacian}.
If $\{v_k\}_k$ is the approximating sequence for the function $v=\frac{u}{\alpha}$, we define $u_k \coloneqq \alpha v_k$. 
By \Cref{Lem:EquivalenceOfPositivity_Formula1},  $\{u_k\}_k$ is an approximating sequence for $u$ as it satisfies $(i) - (iii)$ of Theorem \ref{thm:monotone approximation L intro}. 
The proof is trivial and is therefore omitted. 
A little more effort is required to show that if $\sup_\Omega v_k \leq \esup_\Omega v$, then $\sup_\Omega u_k \leq 2 \esup_\Omega u$ for $k$ large enough.

To this end, fix $x\in \Omega$. 
As noted in \Cref{rmk:observation on ess sup} we have
\begin{equation*}
    u_k(x)=\alpha(x) v_k(x)\leq \alpha(x) \esup_{\mathcal{B}_{1/k}(x)}v\leq \frac{\alpha(x)}{\inf_{\mathcal{B}_{1/k}}\alpha} \esup_\Omega u. 
\end{equation*}
Furthermore, for every $y\in \mathcal{B}_{1/k}(x)$ we estimate
\begin{equation}
    \label{eq:alpha(y)/alpha(x)}
    \frac{\alpha (x)}{\alpha (y)}\leq \frac{|\alpha(x)-\alpha(y)|}{\alpha(y)}+1\leq \frac{r_k(x) \sup_{\Omega}|\nabla \alpha|}{\inf_\Omega \alpha}+1
\end{equation}
where $r_k(x)=\sup\{d(x,z) : z\in \mathcal{B}_{1/k}(x)\}$.
Next, we show that the function $r_k(x)$ can be uniformly bounded so that \eqref{eq:alpha(y)/alpha(x)} is bounded above by $2$.

\begin{lem}
\label{lem:rk(x)}
There exists some $k_0\in \mathbb{N}$ such that
\begin{equation*}
    r_k(x) \leq \frac{\inf_\Omega \alpha}{\sup_\Omega |\nabla \alpha|} =:c \qquad \forall x \in \Omega, \quad \forall k \geq k_0. 
\end{equation*}
\begin{proof} 
Suppose by contradiction that there exists a sequence of points $\{x_k\}_k\subset \Omega$ such that $r_k(x_k)>c$ for every $k\in \mathbb{N}$. 
By definition of $r_k(x_k)$, there exists a sequence of points $\{y_k\}_k \subset \mathcal{B}_{1/k}(x_k)$ such that $d(y_k,x_k)>c$. 
Since $\Omega$ is relatively compact, up to a subsequence, we can assume that $x_k\to x_\infty \in \overline{\Omega}$ and $y_k\to y_\infty \in \overline{\Omega}$. Since $y_k \in \mathcal{B}_{1/k}(x_k)$ we have 
\begin{equation}
    \label{eq:G to infty}
    G(x_k,y_k)>k \to +\infty. 
\end{equation}
Note also that the Green function $G$ is smooth and hence continuous on $\Omega \times \Omega \setminus \{x = y\}$.
Note that since $d(x_k, y_k) > c$, then  $d(x_\infty, y_\infty) \ge c$, in particular we deduce that $x_\infty \not\in \partial \Omega$ because the Green function $G$ vanishes on the boundary of $\Omega$.
If $x_\infty \in \Omega$ is not on the boundary, fix $\overline{k} \in \N$. 
By \eqref{eq:G to infty} and continuity of the Green function we have $G(y_\infty,x_\infty) > \overline{k} $ which implies that $y_\infty \in \mathcal{B}_{1/\overline{k} }(x_\infty)$.
In particular we have $d(x_\infty, y_\infty) \leq r_{\overline{k}}(x_\infty) \to 0$, which is a contradiction since $d(x_\infty, y_\infty) \ge c$. 
Indeed, for every $x \in \Omega$,
\begin{equation*}
    \lim_{k \to + \infty} r_k(x) = 0. 
\end{equation*}
Clearly, $r_k(x)$ is a monotone decreasing sequence in $k$. 
If its limit is some $r_0 \neq 0$ this implies that $r_k(x) \geq r_0$ for all $k$. 
In particular the geodesic ball $B_{r_0}(x)$ is contained in $\mathcal{B}_{1/k}(x)$ for all $k \in \N$. 
This, however, is a contradiction since 
\begin{equation*}
    \bigcap_{k=1}^{\infty} \mathcal{B}_{1/k}(x)=\{x\}. 
\end{equation*}
\end{proof}
\end{lem}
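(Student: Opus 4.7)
The plan is to argue by contradiction, exploiting the compactness of $\overline{\Omega}$ together with the continuity of the Dirichlet Green function $G$ of $\Delta_\alpha$ on $\overline{\Omega} \times \overline{\Omega} \setminus \{x = y\}$.

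Suppose the conclusion fails. Then there exist a subsequence $k_j \to \infty$ and points $x_j \in \Omega$ with $r_{k_j}(x_j) > c$. By definition of $r_k$, for each $j$ I can pick $y_j \in \mathcal{B}_{1/k_j}(x_j)$ satisfying $d(x_j, y_j) > c$; by the defining inequality of $\mathcal{B}_{1/k_j}$ this forces $G(x_j, y_j) > k_j \to +\infty$. Since $\Omega \Subset M$, passing to further subsequences I may assume $x_j \to x_\infty$ and $y_j \to y_\infty$ in $\overline{\Omega}$, with $d(x_\infty, y_\infty) \geq c > 0$; in particular $(x_\infty, y_\infty)$ lies off the diagonal.

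The crucial input is property (b) of the Green function recalled at the beginning of \Cref{sec:monotone approximation}: $G$ extends continuously to $\overline{\Omega} \times \overline{\Omega} \setminus \{x = y\}$ with zero boundary values. Consequently $G(x_j, y_j) \to G(x_\infty, y_\infty) \in [0, +\infty)$, contradicting $G(x_j, y_j) > k_j \to \infty$. Note that this single contradiction handles all possible positions of $x_\infty$ and $y_\infty$ (interior or boundary), since the distance lower bound $d(x_\infty, y_\infty) \geq c > 0$ keeps the limit pair away from the diagonal where $G$ blows up.

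The main obstacle is making rigorous the joint continuity of $G$ up to $\partial \Omega$ off the diagonal; this is essentially already built into property (b) for the smoothly bounded relatively compact sets considered here. If one prefers a softer argument that avoids delicate boundary behavior, an alternative route is to first establish the pointwise convergence $r_k(x) \to 0$ for each fixed $x \in \Omega$, using the monotonicity $\mathcal{B}_{1/(k+1)}(x) \subseteq \mathcal{B}_{1/k}(x)$ and the identity $\bigcap_k \mathcal{B}_{1/k}(x) = \{x\}$ (since $G(x, \cdot)$ is finite away from $x$), and then upgrade this pointwise fact to the required uniform estimate via the same compactness argument.
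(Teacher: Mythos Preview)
Your argument is correct and follows the same overall strategy as the paper: contradiction, compactness of $\overline{\Omega}$, and continuity of the Dirichlet Green function off the diagonal. The difference is only in how the contradiction is extracted. The paper first rules out $x_\infty\in\partial\Omega$ via the boundary vanishing of $G$, and then, for interior $x_\infty$, argues that $G(x_\infty,y_\infty)>\overline{k}$ for every $\overline{k}$, so $y_\infty\in\mathcal{B}_{1/\overline{k}}(x_\infty)$ for all $\overline{k}$; it then proves separately that $r_k(x)\to 0$ pointwise (using $\bigcap_k\mathcal{B}_{1/k}(x)=\{x\}$) to force $d(x_\infty,y_\infty)=0$. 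Your version short-circuits this detour: since $d(x_\infty,y_\infty)\ge c>0$, the pair stays off the diagonal, and joint continuity of $G$ up to the boundary gives $G(x_j,y_j)\to G(x_\infty,y_\infty)\in[0,\infty)$, contradicting $G(x_j,y_j)\to\infty$ in a single stroke. This is cleaner and handles the boundary and interior cases simultaneously; the paper's route has the minor advantage of making the auxiliary fact $r_k(x)\to 0$ explicit, though it is not logically needed once one invokes continuity directly. Your caveat about the joint continuity of $G$ up to $\partial\Omega$ off the diagonal is well placed: the paper's listed properties (a)--(c) do not state it explicitly, but the paper's own proof uses it implicitly when excluding $x_\infty\in\partial\Omega$, and it is standard for Dirichlet Green functions on smoothly bounded relatively compact domains.
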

Thanks to \Cref{lem:rk(x)}, up to taking $k$ large enough, we have 
\begin{equation*}
    \alpha(x)\leq 2\alpha (y) \qquad \forall x \in \Omega \text{ and } \forall y \in \mathcal{B}_{1/k}(x),
\end{equation*}
hence,
\begin{equation*}
    u_k(x)\leq \frac{\alpha(x)}{\inf_{\mathcal{B}_{1/k}}\alpha} \esup_\Omega u \leq 2 \esup_\Omega u \qquad \forall x \in \Omega.
\end{equation*}
This concludes the proof of \Cref{thm:monotone approximation L intro}.

\subsection{Remarks on the global case}
\label{sec:global case}
A careful analysis of above proofs shows that the monotone approximation results can be obtained globally on the whole manifold $M$ as long as there exists a minimal positive Green function for the operator $\D_\alpha$ and the super level sets $\mathcal{B}_{r}(x)$ are compact. 
Not all Riemannian manifolds, however, satisfy these conditions. 
We recall the following

\begin{defin}
A Riemannian manifold $(M,g)$ is said to be \textit{$\alpha$-non-parabolic} if there exists a minimal positive Green function $G$ for the operator $\D_\alpha$. Moreover, if this Green function satisfies
\begin{equation}
    \label{Def:StronglyNonParabolic}
    \lim_{y\to \infty}G(x,y)=0,
\end{equation}
the manifold $M$ is said to be \textit{strongly $\alpha$-non-parabolic}.
\end{defin}

Note that compact Riemannian manifold are always $\alpha$-parabolic thus we focus on the complete, non-compact case. 
It is also known that if $(M, g)$ is a geodesically complete, $\alpha$-non-parabolic manifold, then 
\begin{equation}
    \label{Eq:VolumeGrowth}
    \int_1^\infty \frac{t}{\vol_\alpha (B_t(x))} dt<\infty
\end{equation}
where $\vol_\alpha(B_t(p))$ is the volume of the geodesic ball of radius $t$ and center $x$ with respect to the measure $\alpha^2 d\mu_g$.
See for instance Theorem 9.7 of \cite{Greg2006}.
Furthermore, if we assume a non-negative $m$-Bakry-\'Emery Ricci tensor $\Ric^m_f \coloneqq \Ric + \Hess(f) - \frac{1}{m} df \otimes df \ge 0$ with $f = -2\log{\alpha}$, it is possible to prove some Li-Yau type estimates for the heat kernel, see Theorems 5.6 and 5.8 in \cite{CL2015}. 
Integrating in time these estimates we obtain the following bounds for the Green function 
\begin{equation*}
    C^{-1} \int_{d(x,y)}^\infty \frac{t}{\vol_\alpha (B_t(x))} dt \leq G(x,y) \leq C \int_{d(x,y)}^\infty \frac{t}{\vol_\alpha (B_t(x))} dt. 
\end{equation*}
In particular if \eqref{Eq:VolumeGrowth} holds true and $\Ric^m_f \ge 0$, the previous estimate implies that the manifold at hand is strongly $\alpha$-non parabolic. 
It would be interesting to investigate which geometric conditions on the manifold $(M, g)$ imply the existence of a function $\alpha$ such that \eqref{Eq:VolumeGrowth} and $\Ric^m_f \ge 0$ hold true. 

\section{A counterexample to the \texorpdfstring{$L^1$}{L 1}-positivity preserving property}
\label{sec:counterexample L1}
This section is devoted to the proof of \Cref{thm:L1 ppp intro}.
Fix $\varepsilon>0$ and consider the 2-dimensional model manifold $M =\R_{+}\times_{\sigma}\mathbb{S}^1$, that is $\R_{+}\times\mathbb{S}^1$ with the metric $g = dt^2 + \sigma^2(t) d\theta^2$. 
Here $d\theta^2$ is the standard round metric on $\mathbb{S}^1$ and $\sigma = \sigma_\varepsilon$ is a $C^\infty((0,+\infty))$ function satisfying
\begin{equation*}
    \sigma(t) =
    \begin{cases}
    j(t) &t> t_\varepsilon \\
    t &t< \frac{1}{4}
    \end{cases}.
\end{equation*}
Here $t_\varepsilon = \left(2(1+\varepsilon)\varepsilon\right)^{-1/2\varepsilon}$ and the function $j$ is defined as
\begin{equation*}
    j(t) = \frac{e^{-t^{2+2\varepsilon}}}{t^{1+\varepsilon}}. 
\end{equation*}
By a direct computation we have 
\begin{align*}
    j'(t) &=-(1+\varepsilon) e^{-t^{2+2\varepsilon}} \left(2t^\varepsilon+\frac{1}{t^{2+\varepsilon}} \right)\\
    j''(t) &= (1+\varepsilon) e^{-t^{2+2\varepsilon}} \left[2t^{\varepsilon-1} + 4(1+\varepsilon)t^{1+3\varepsilon} + (2+\varepsilon)\frac{1}{t^{3+\varepsilon}}\right]. 
\end{align*}
As a result, outside of a compact set we have the following asymptotic estimate for the Gaussian curvature:
\begin{align*}
   K(t, \theta)&=-\frac{j''(t)}{j(t)}g\\
    & = -(1+\varepsilon) \left[2 t^{2\varepsilon} + 4(1+\varepsilon) t^{2+4\varepsilon} + (2+\varepsilon)\frac{1}{t^2}\right] g \\
    &\sim -4 (1+\varepsilon)^2 t^{2+4\varepsilon}g
\end{align*}
as $t \to + \infty$. 
Next we define the function  $U(t,\theta)=u(t)=(e^{t^{2+2\varepsilon}}-e^{t_\varepsilon^{2+2\varepsilon}})_+$ and prove that it satisfies 
\begin{equation*}
    \Delta U\geq U
\end{equation*}
in the sense of distributions. 
If $t > t_\varepsilon$, by direct computation we have 
\begin{align*}
    u'(t)&= 2(1+\varepsilon)t^{1+2\varepsilon} e^{t^{2+2\varepsilon}}\\
    u''(t)&=2(1+\varepsilon) e^{t^{2+2\varepsilon}} \left[2(1+\varepsilon) t^{2+4\varepsilon}+(1+2\varepsilon)t^{2\varepsilon} \right]
\end{align*}
thus
\begin{equation*}
    \Delta U-U=u''(t)+\frac{j'(t)}{j(t)} u'(t)-u(t)=e^{t^{2+2\varepsilon}}\left[2(1+\varepsilon)\varepsilon t^{2\varepsilon}-1\right]+e^{t_\varepsilon^{2+2\varepsilon}}\geq 0. 
\end{equation*}
On the other hand, if $t< t_\varepsilon$ the function $U$ is identically zero, so that $\Delta U - U \geq 0$ also for $t\in (0, t_\varepsilon)$.
To see that $\Delta U\geq U$ in the sense of distributions on the whole manifold we take $0\leq \varphi \in C^\infty_c(M)$ and set  $\overline{M} \coloneqq M \setminus B_{t_\varepsilon}(0)$. 
Then we compute
\begin{align*}
    \int_{M} U(\Delta \varphi -\varphi)&= \int_{\overline{M}} U(\Delta \varphi-\varphi)\\
    &=-\int_{\overline{M}} g(\nabla \varphi, \nabla U) + \int_{\partial \overline{M}} U \frac{\partial \varphi}{\partial \nu}- \int_{\overline{M}} U \varphi\\
    &=-\int_{\overline{M}} g(\nabla \varphi, \nabla U) - \int_{\overline{M}} U \varphi\\
    &=\int_{\overline{M}}\Delta U \varphi - \int_{\partial \overline{M}} \frac{\partial U}{\partial \nu} \varphi - \int_{\overline{M}} U \varphi\\
    &=\int_{\overline{M}}\Delta U \varphi + \int_{\partial B_{t_\varepsilon}(0)} \frac{\partial U}{\partial t} \varphi - \int_{\overline{M}} U \varphi\\
    &=\int_{\overline{M}}(\Delta U-U) \varphi + \int_{\partial B_{t_\varepsilon}(0)} u' \varphi \geq 0.
\end{align*}
On the other hand we have:
\begin{equation*}
    \int_M |U| dV_g = \omega_m\int_0^{+\infty} u(t) j(t) dt = \int_{t_\varepsilon}^{+\infty} \frac{1}{t^{1+\varepsilon}} dt < + \infty. 
\end{equation*}
In conclusion, if we set $V = -U$ we have $V \in L^1(M)$ and $(-\D + 1)V \ge 0$ but $V \le 0$, which contradicts the validity of the $L^1$-positivity preserving property on $M$. 
\bibliography{references}
\bibliographystyle{acm}
\end{document}